\numberwithin{equation}{section}
 \newtheorem{theorem}{Theorem}[section]
\newtheorem{lemma}[theorem]{Lemma}
\newtheorem{corollary}[theorem]{Corollary}
\newtheorem{remark}[theorem]{Remark}
\title{On (\textit{m,n,l})-Jordan Centralizers of Some Algebras}
\author{\begin{tabular}{c}Jianbin Guo, Jiankui Li\footnote{Corresponding author.
E-mail address: jiankuili@yahoo.com} ~and Qihua Shen\\
{\small\it Department of Mathematics, East China University of
Science and Technology}\\
{\small\it Shanghai 200237, P. R. China}
\end{tabular}}
\date{}
\begin{document}
\maketitle \abstract
Let $\mathcal{A}$ be a unital algebra over a number field $\mathbb{K}$. A linear mapping $\delta$ from $\mathcal{A}$ into itself is called a weak (\textit{m,n,l})-Jordan centralizer
if $(m+n+l)\delta(A^2)-m\delta(A)A-nA\delta(A)-lA\delta(I)A\in \mathbb{K}I$ for every $A\in \mathcal{A}$,
where $m\geq0, n\geq0, l\geq0$ are fixed integers with $m+n+l\neq 0$.
In this paper, we study weak (\textit{m,n,l})-Jordan centralizer on generalized matrix algebras and some reflexive algebras alg$\mathcal{L}$, where $\mathcal{L}$ is a CSL or satisfies $\vee\{L: L\in \mathcal{J}(\mathcal{L})\}=X$ or $\wedge\{L_-: L\in \mathcal{J}(\mathcal{L})\}=(0)$, and prove that each weak (\textit{m,n,l})-Jordan centralizer of these algebras is a centralizer when $m+l\geq1$ and $n+l\geq1$.

\

{\sl Keywords} : Centralizer, (\textit{m,n,l})-Jordan centralizer, CSL algebra, Generalized matrix algebra, Reflexive algebra

\

{\sl 2000 AMS classification} : Primary 47L35; Secondly 17B40, 17B60
\

\section{Introduction}\

Let $\mathcal{A}$ be an algebra over a number field $\mathbb{K}$ and
$\mathcal{M}$ be an $\mathcal{A}$-bimodule. An additive (linear)
mapping $\delta: \mathcal{A}\rightarrow \mathcal{M}$ is called a
\textit{left (right) centralizer} if $\delta(AB)=\delta(A)B$
($\delta(AB)=A\delta(B)$) for all $A, B\in \mathcal{A}$. An additive
(linear) mapping $\delta: \mathcal{A}\rightarrow \mathcal{M}$ is
called a \textit{left (right) Jordan centralizer} if
$\delta(A^2)=\delta(A)A$ ($\delta(A^2)=A\delta(A)$) for every $A\in
\mathcal{A}$. We call $\delta$ a \textit{centralizer} if $\delta$ is
both a left centralizer and a right centralizer. Similarly, we can
define a \textit{Jordan centralizer}. It is clear that every
centralizer is a Jordan centralizer, but the converse is not true in
general. In \cite{Zalar}, Zalar proved that each left Jordan
centralizer of a semiprime ring is a left centralizer and each
Jordan centralizer of a semiprime ring is a centralizer. For some
other results, see \cite{CSR,CORA,OCS} and references therein.\

In \cite{mn}, Vukman defined a new type of Jordan centralizers, named (\textit{m,n})-\textit{Jordan centralizer}, that is
an additive mapping $\delta$ from a ring $\mathcal{R}$ into itself such that $$(m+n)\delta(x^2)=m\delta(x)x+nx\delta(x)$$ for every $x\in \mathcal{R}$,
where $m\geq0$, $n\geq0$ are fixed integers with $m+n\neq0$.
Obviously, (1,0)-Jordan centralizer is a left Jordan centralizer and (0,1)-Jordan centralizer is a right Jordan centralizer.
Moreover, each Jordan centralizer is an ($m, n$)-Jordan centralizer and
$(1, 1)$-Jordan centralizer satisfies the relation $2\delta(x^2)=\delta(x)x+x\delta(x)$ for every $x\in \mathcal{R}$.
In \cite{CSR}, Vukman showed that $(1, 1)$-Jordan centralizer of a 2-torsion free semiprime ring $\mathcal{R}$ is a centralizer.
In \cite{submit}, Guo and Li studied $(1, 1)$-Jordan centralizer of some reflexive algebras.
In \cite{mn}, Vukman investigated ($m, n$)-Jordan centralizer and proved that for $m\geq1$ and $n\geq1$, every $(m, n)$-Jordan centralizer of a prime ring $\mathcal{R}$ with $char(\mathcal{R})\neq6mn(m+n)$ is a centralizer.
Motivated by this, we define a new type of Jordan centralizer, named \textit{weak }(\textit{m,n,l})-\textit{Jordan centralizer},
that is a linear mapping $\delta$ from a unital algebra $\mathcal{A}$ into itself satisfying
$$(m+n+l)\delta(A^2)-m\delta(A)A-nA\delta(A)-lA\delta(I)A\in\mathbb{K}I$$
for every $A\in \mathcal{A}$, where $m\geq0, n\geq0, l\geq0$ are fixed integers with $m+n+l\neq 0$.
This is equivalent to say that for every $A\in \mathcal{A}$, there exists a $\lambda_A\in \mathbb{K}$ such that
\begin{eqnarray}
(m+n+l)\delta(A^2)=m\delta(A)A+nA\delta(A)+lA\delta(I)A+\lambda_AI.\label{101}
\end{eqnarray}
When $\lambda_A=0$ for every $A\in \mathcal{A}$, we call such a $\delta$ an (\textit{m,n,l})-\textit{Jordan centralizer}.
It is clear that each (\textit{m,n,l})-Jordan centralizer is a weak (\textit{m,n,l})-Jordan centralizer, each (\textit{m,n,\emph{0}})-Jordan centralizer is an (\textit{m,n})-Jordan centralizer and (0,0,1)-Jordan centralizer has
the relation $\delta(A^2)=A\delta(I)A$ for every $A\in \mathcal{A}$. In this paper, we study (weak) (\textit{m,n,l})-Jordan centralizer on some reflexive algebras and generalized matrix algebras.

Let $X$ be a Banach space over $\mathbb{K}$ and $B(X)$ be the set of all bounded operators on $X$, where $\mathbb{K}$ is the real field $\mathbb{R}$ or the complex field $\mathbb{C}$.
We use $X^*$ to denote the set of all bounded linear functionals on $X$.
For $A\in B(X)$, denote by $A^*$ the adjoint of $A$.
For any non-empty subset $L\subseteq X$, $L^\perp$ denotes its annihilator, that is,
$L^\perp=\{f\in X^*: f(x)=0~\mathrm{for}~\mathrm{all}~x\in L\}$.
By a subspace lattice on $X$, we mean a collection $\mathcal{L}$ of closed subspaces of $X$
with (0) and $X$ in $\mathcal{L}$ such that for every family $\{M_r\}$ of elements of $\mathcal{L}$,
both $\cap M_r$ and $\vee M_r$ belong to $\mathcal{L}$. For a subspace lattice $\mathcal{L}$ of $X$,
let alg$\mathcal{L}$ denote the algebra of all operators in $B(X)$ that leave members of $\mathcal{L}$ invariant;
and for a subalgebra $\mathcal{A}$ of $B(X)$, let lat$\mathcal{A}$ denote the lattice of all closed subspaces of $X$
that are invariant under all operators in $\mathcal{A}$.
An algebra $\mathcal{A}$ is called \textit{reflexive} if alglat$\mathcal{A}=\mathcal{A}$;
and dually, a subspace lattice is called \textsl{reflexive} if latalg$\mathcal{L}=\mathcal{L}$.
Every reflexive algebra is of the form alg$\mathcal{L}$ for some subspace lattice $\mathcal{L}$ and vice versa.

For a subspace lattice $\mathcal{L}$ and for $E\in \mathcal{L}$, define
$$E_-=\vee\{F\in \mathcal{L}: F\nsupseteq E\}~\mathrm{and}~E_+=\wedge\{F\in \mathcal{L}: F\nleq E\}.$$
Put
$$\mathcal{J}(\mathcal{L})=\{K\in \mathcal{L}: K\neq(0)~\mathrm{and}~K_-\neq X\}.$$
For any non-zero vectors $x\in X$ and $f\in X^*$, the rank one operator $x\otimes f$ is defined by $x\otimes f(y)=f(y)x$ for $y\in X$.
Several authors have studied the properties of the set of rank one operators in reflexive algebras (for example, see \cite{lambrou, ORO}).
It is well known (see \cite{ORO}) that $x\otimes f\in \textrm{alg}\mathcal{L}$ if and only if there exists some $K\in \mathcal{J}(\mathcal{L})$
such that $x\in K$ and $f\in K_-^\perp$.
When $X$ is a separable Hilbert space over the complex field $\mathbb{C}$, we change it to $H$. In a Hilbert space,
we disregard the distinction between a closed subspace and the orthogonal projection onto it.
A subspace lattice $\mathcal L$ on a Hilbert space $H$ is called a \textit{commutative~subspace~lattice}
(\textit{CSL}), if all projections in $\mathcal L$ commute pairwise. If $\mathcal{L}$ is a CSL, then the
corresponding algebra alg$\mathcal{L}$ is called a \textit{CSL algebra}. By \cite{CSL}, we know that if $\mathcal{L}$ is a CSL, then $\mathcal{L}$
is reflexive.
Let $\mathcal{L}$ be a subspace lattice on a Banach space $X$ satisfying
$\vee\{L: L\in \mathcal{J}(\mathcal{L})\}=X$ or $\wedge\{L_-: L\in \mathcal{J}(\mathcal{L})\}=(0)$.
In \cite{JDRA}, Lu considered this kind of reflexive algebras which have rich rank one operators.
In Section 2, we prove that if $\delta$ is a weak (\textit{m,n,l})-Jordan centralizer from alg$\mathcal{L}$ into itself, where $\mathcal{L}$ is a CSL or satisfies $\vee\{L: L\in \mathcal{J}(\mathcal{L})\}=X$ or $\wedge\{L_-: L\in \mathcal{J}(\mathcal{L})\}=(0)$,
then $\delta$ is a centralizer.

A \textit{Morita context} is a set $(\mathcal{A}, \mathcal{B}, \mathcal{M}, \mathcal{N})$
and two mappings $\phi$ and $\varphi$, where $\mathcal{A}$ and $\mathcal{B}$ are two algebras over a number field $\mathbb{K}$, $\mathcal{M}$ is an
$(\mathcal{A}, \mathcal{B})$-bimodule and $\mathcal{N}$ is a $(\mathcal{B}, \mathcal{A})$-bimodule.
The mappings $\phi: M\otimes_\mathcal{B} N\rightarrow A$ and $\varphi: N\otimes_\mathcal{A} M\rightarrow B$ are two bimodule homomorphisms satisfying
$\phi(M\otimes N)M'=M\varphi(N\otimes M')$ and $\varphi(N\otimes M)N'=N\phi(M\otimes N')$ for any $M, M'\in \mathcal{M}$ and
$N, N'\in \mathcal{N}$. These conditions insure that the set
$$\left[
  \begin{array}{cc}
    \mathcal{A} & \mathcal{M} \\
    \mathcal{N} & \mathcal{B} \\
  \end{array}
\right]=\left\{\left[
            \begin{array}{cc}
              A& M \\
              N & B \\
            \end{array}
          \right]\mid ~A\in \mathcal{A}, M\in \mathcal{M}, N\in \mathcal{N}, B\in \mathcal{B}\right\}$$
form an algebra over $\mathbb{K}$ under usual matrix operations.
We call such an algebra a \textit{generalized matrix algebra} and denoted by
$\mathcal{U}=\left[
  \begin{array}{cc}
    \mathcal{A} & \mathcal{M} \\
    \mathcal{N} & \mathcal{B} \\
  \end{array}
\right]$, where $\mathcal{A}$ and $\mathcal{B}$ are two unital algebras and at least one of the two bimodules $\mathcal{M}$ and $\mathcal{N}$
is distinct from zero.  This kind of algebra was first introduced
by Sands in \cite{sands}.
Obviously, when $\mathcal{M}=0$ or $\mathcal{N}=0$, $\mathcal{U}$ degenerates to the triangular algebra.
In Section 3, we show that if $\delta$ is a weak (\textit{m,n,l})-Jordan centralizer
from $\mathcal{U}$ into itself, then $\delta$ is a centralizer. We also study $(m,n,l)$-Jordan centralizer on AF algebras.
Throughout the paper, we assume $m,n,l\in \mathbb{N}$ are such that $m+l\geq1$, $n+l\geq1$.

\section{Centralizers of certain reflexive algebras}
\

In order to prove our main results, we need the following  several lemmas.

\begin{lemma}\label{201}
Let $\mathcal{A}$ be a unital algebra with identity $I$ and $\delta$ be a weak \emph{(}\textit{m,n,l}\emph{)}-Jordan centralizer
from $\mathcal{A}$ into itself. Then for any $A,B\in\mathcal{A}$,
\begin{eqnarray}
&&(m+n+l)\delta(AB+BA)=m\delta(A)B+m\delta(B)A+nA\delta(B)+nB\delta(A)\nonumber\\
&&~~~~~~~~~~~~~~~~~~~~~~~~~~~~~~~~~+lA\delta(I)B+lB\delta(I)A+(\lambda_{A+B}-\lambda_A-\lambda_B)I.\label{2001}
\end{eqnarray}
In particular, for any $A\in\mathcal{A}$,
\begin{eqnarray}
\delta(A)=\frac{m+l}{m+n+2l}\delta(I)A+\frac{n+l}{m+n+2l}A\delta(I)+\lambda(A),\label{2002}
\end{eqnarray}
where we set $\lambda(A)=\frac{1}{m+n+2l}(\lambda_{A+I}-\lambda_A)I$ for every $A \in \mathcal{A}$.
\end{lemma}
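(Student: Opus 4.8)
The plan is to obtain (\ref{2001}) by a standard polarization of the defining identity (\ref{101}), and then to derive (\ref{2002}) by specializing $B=I$ in (\ref{2001}) after first checking that $\lambda_I=0$.

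First I would apply (\ref{101}) to the element $A+B$. Since $\delta$ is linear and $(A+B)^2=A^2+(AB+BA)+B^2$, the left-hand side becomes $(m+n+l)\bigl[\delta(A^2)+\delta(AB+BA)+\delta(B^2)\bigr]$, while expanding $m\delta(A+B)(A+B)+n(A+B)\delta(A+B)+l(A+B)\delta(I)(A+B)$ produces the ``diagonal'' terms in $A$ alone and in $B$ alone, together with the cross terms $m\delta(A)B+m\delta(B)A$, $nA\delta(B)+nB\delta(A)$, and $lA\delta(I)B+lB\delta(I)A$. Subtracting the two instances of (\ref{101}) written for $A$ and for $B$ then cancels all the diagonal terms as well as the scalars $\lambda_A I$ and $\lambda_B I$, leaving exactly (\ref{2001}) with the residual scalar $(\lambda_{A+B}-\lambda_A-\lambda_B)I$.

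For (\ref{2002}) I would first observe that applying (\ref{101}) with $A=I$ gives $(m+n+l)\delta(I)=(m+n+l)\delta(I)+\lambda_I I$, whence $\lambda_I I=0$ and so $\lambda_I=0$. Then I set $B=I$ in (\ref{2001}); since $AI+IA=2A$ and $\delta$ is linear, the left-hand side is $2(m+n+l)\delta(A)$, while the right-hand side collects (using $\lambda_I=0$) as $(m+n)\delta(A)+(m+l)\delta(I)A+(n+l)A\delta(I)+(\lambda_{A+I}-\lambda_A)I$. Transposing the $(m+n)\delta(A)$ term and using $2(m+n+l)-(m+n)=m+n+2l$ leaves $(m+n+2l)\delta(A)$ on the left. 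Dividing by $m+n+2l$—which is legitimate because the standing hypotheses $m+l\geq1$ and $n+l\geq1$ force $m+n+2l\geq2>0$ and $\mathbb{K}$ contains $\mathbb{Q}$—yields (\ref{2002}) with $\lambda(A)=\frac{1}{m+n+2l}(\lambda_{A+I}-\lambda_A)I$.

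The computations are elementary, so I expect no serious obstacle; the only points demanding care are the bookkeeping of coefficients when collecting like terms (so that the factors $m+l$ and $n+l$ emerge correctly from $m{+}l$ and $n{+}l$ separately) and the observation that $\lambda_I=0$, without which the scalar term in (\ref{2002}) would carry a spurious $\lambda_I$.
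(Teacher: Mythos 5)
Your proof is correct and follows essentially the same route as the paper's: polarize the defining identity (\ref{101}) by replacing $A$ with $A+B$ to obtain (\ref{2001}), then set $B=I$ and collect terms to get (\ref{2002}). The only difference is that you explicitly verify $\lambda_I=0$ (by applying (\ref{101}) at $A=I$) and that $m+n+2l\geq 2$, two small details the paper asserts or uses without comment.
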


\begin{proof}
Since $\delta$ is a weak (\textit{m,n,l})-Jordan centralizer, we have
\begin{eqnarray*}
(m+n+l)\delta(A^2)=m\delta(A)A+nA\delta(A)+lA\delta(I)A+\lambda_AI
\end{eqnarray*}
for every $A\in \mathcal{A}.$
Replacing $A$ by $A+B$ in above equation, (\ref{2001}) holds. Letting $B=I$ in (\ref{2001}) gives (\ref{2002}), since $\lambda_I=0$.
\end{proof}

\begin{remark}\label{211}
\emph{For an (\textit{m,n,l})-Jordan centralizer, we could actually define it from a unital algebra $\mathcal{A}$ to an $\mathcal{A}$-bimodule. Hence when lemmas in this section are applied to an (\textit{m,n,l})-Jordan centralizer $\delta$, we will take it for granted that $\delta$ is from a unital algebra $\mathcal{A}$ to its bimodule, since all the proofs remain true if we set $\lambda_A=0$ for all $A\in\mathcal{A}$.}
\end{remark}
\begin{remark} \label{212}
\emph{Obviously, each (1,0,0)-Jordan centralizer is a left Jordan centralizer and each (0,1,0)-Jordan centralizer is a right Jordan centralizer.
So by Lemma \ref{201}, it follows that every left Jordan centralizer of unital algebras is a left centralizer
and every right Jordan centralizer of unital algebras is a right centralizer.
Therefore every Jordan centralizer of unital algebras is a centralizer.}
\end{remark}
Let $f$ be a linear mapping from an algebra $\mathcal{A}$ to its bimodule $\mathcal{M}$. Recall that $f$ is a \emph{derivation} if $f(ab)=f(a)b+af(b)$ for all $a,b\in\mathcal{A}$; it is a \emph{Jordan derivation} if $f(ab)=f(a)a+af(a)$ for every $a\in \mathcal{A}$; it is a \emph{generalized derivation} if $f(ab)=f(a)b+ad(b)$ for all $a,b\in\mathcal{A}$, where $d$ is a derivation from $\mathcal{A}$ to $\mathcal{M}$; and it is a \emph{generalized Jordan derivation} if $f(ab)=f(a)a+ad(a)$ for every $a\in\mathcal{A}$, where $d$ is a Jordan derivation from $\mathcal{A}$ to $\mathcal{M}$.
From Remarks \ref{211} and \ref{212}, we have the following corollary.
\begin{corollary}\label{202}
Let $\mathcal{L}$ be a subspace lattice on a Banach space $X$ satisfying $\vee\{F: F\in \mathcal{J}(\mathcal{L})\}=X$ or
$\wedge\{L_-: L\in \mathcal{J}(\mathcal{L})\}=(0)$.
If $f$ is a generalized Jordan derivation from alg$\mathcal{L}$ to $B(X)$, then $f$ is a generalized derivation.
\end{corollary}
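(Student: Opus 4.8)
The plan is to leverage the structural characterization of linear maps given by Lemma \ref{201} together with the earlier rank-one decomposition of $\mathrm{alg}\mathcal{L}$. A generalized Jordan derivation $f$ with associated Jordan derivation $d$ satisfies $f(A^2)=f(A)A+Ad(A)$. The first move is to relate $f$ to the $(m,n,l)$-Jordan centralizer framework so that Lemma \ref{201} applies. Taking $m=n=1$, $l=0$, a $(1,1,0)$-Jordan centralizer satisfies $2\delta(A^2)=\delta(A)A+A\delta(A)$; this is not quite our $f$, so instead I would work directly: polarizing the defining identity $f(A^2)=f(A)A+Ad(A)$ by replacing $A$ with $A+B$ yields
\begin{eqnarray}
f(AB+BA)=f(A)B+f(B)A+Ad(B)+Bd(A).\label{genjd}
\end{eqnarray}
Setting $B=I$ and using that $d$ is a Jordan derivation (so $d(I)=0$ by the standard polarization $d(I)=d(I)I+Id(I)=2d(I)$) gives $2f(A)=f(A)+f(I)A+Ad(I)+d(A)$, hence $f(A)=f(I)A+d(A)$. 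This is the \emph{key structural reduction}: $f$ differs from the left-multiplication map $A\mapsto f(I)A$ by exactly the Jordan derivation $d$.

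With this reduction in hand, proving $f$ is a generalized derivation, i.e. $f(AB)=f(A)B+Ad(B)$, becomes equivalent to proving that $d$ itself is a genuine derivation, since $f(AB)=f(I)AB+d(AB)$ while $f(A)B+Ad(B)=f(I)AB+d(A)B+Ad(B)$. So the whole problem collapses to the assertion that \emph{every Jordan derivation $d$ from $\mathrm{alg}\mathcal{L}$ to $B(X)$ is a derivation} under the stated hypothesis $\vee\{F:F\in\mathcal{J}(\mathcal{L})\}=X$ or $\wedge\{L_-:L\in\mathcal{J}(\mathcal{L})\}=(0)$. I would establish this by the familiar rank-one technique: using the excerpt's criterion that $x\otimes f\in\mathrm{alg}\mathcal{L}$ exactly when $x\in K$ and $f\in K_-^\perp$ for some $K\in\mathcal{J}(\mathcal{L})$, one first shows $d$ behaves as a derivation on all admissible rank-one operators, then extends to products $RAS$ of two rank-ones with an arbitrary $A$ in between via the Jordan identity \eqref{genjd} applied to $d$, and finally uses the density/spanning hypothesis on $\mathcal{J}(\mathcal{L})$ to conclude $d(AB)=d(A)B+Ad(B)$ for all $A,B$.

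The main obstacle I anticipate is the final spanning step: the hypotheses $\vee\{F:F\in\mathcal{J}(\mathcal{L})\}=X$ and $\wedge\{L_-:L\in\mathcal{J}(\mathcal{L})\}=(0)$ guarantee a \emph{rich} supply of rank-one operators, but one must carefully pass from the identity on rank-ones to the identity on general operators. The standard manoeuvre is to fix $A,B$ and a rank-one $R=x\otimes g\in\mathrm{alg}\mathcal{L}$, compute $d(RAB R)$ two ways using the Jordan relation, and deduce $(d(AB)-d(A)B-Ad(B))$ annihilates a spanning set of vectors and functionals; the hypothesis then forces it to vanish. Because the problem has been reduced purely to Jordan-derivation-implies-derivation on these reflexive algebras — a result well documented in the rank-one literature cited in the excerpt (\cite{lambrou,ORO,JDRA}) — the argument is routine once the reduction $f(A)=f(I)A+d(A)$ is in place, and I would present the spanning computation only in the generality needed to invoke that known conclusion.
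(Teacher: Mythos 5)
Your proposal is correct and takes essentially the same route as the paper: your polarization identity $f(A)=f(I)A+d(A)$ is precisely what the paper obtains by observing that $\delta=f-d$ is a left Jordan centralizer and applying Lemma \ref{201}/Remark \ref{212}, and both arguments then rest entirely on the fact that the associated Jordan derivation $d$ is a derivation, which is exactly \cite[Theorem 2.1]{JDRA}. The only difference is cosmetic: the paper simply cites that theorem, whereas you sketch (too vaguely to stand on its own) a rank-one re-proof of it before conceding you would invoke the known result anyway --- citing it directly, as the paper does, is the cleaner and intended step.
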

\begin{proof}
Since $f$ is a generalized Jordan derivation, we have the relation
\begin{eqnarray*}
f(A^2)=f(A)A+Ad(A)
\end{eqnarray*}
for every $A\in \textrm{alg}\mathcal{L}$, where $d$ is a Jordan derivation of alg$\mathcal{L}$.
By \cite[Theorem 2.1]{JDRA}, one can conclude that $d$ is a derivation. Let $\delta=f-d$. Then we have
$$\delta(A^2)=f(A^2)-d(A^2)=f(A)A+Ad(A)-Ad(A)-d(A)A=f(A)A-d(A)A=\delta(A)A$$
for every $A\in \textrm{alg}\mathcal{L}$. This means that $\delta$ is a left Jordan centralizer. By
Remark \ref{212}, $\delta$ is a left centralizer. Hence
$$f(AB)=d(AB)+\delta(AB)=d(A)B+Ad(B)+\delta(A)B=f(A)B+Ad(B)$$
for all $A, B\in \textrm{alg}\mathcal{L}$. In other words, $f$ is a generalized derivation.
\end{proof}

Since every Jordan derivation of CSL algebras is a derivation \cite{JDCSL}, we also have the following corollary.

\begin{corollary}\label{203}
Let $\mathcal{L}$ be a \textit{CSL} on a Hilbert space $H$.
If $f$ is a generalized Jordan derivation from alg$\mathcal{L}$ into itself, then $f$ is a generalized derivation.
\end{corollary}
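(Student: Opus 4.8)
The plan is to follow exactly the template of the proof of Corollary \ref{202}, replacing the appeal to \cite[Theorem 2.1]{JDRA} with the corresponding result for CSL algebras. The key structural fact I would rely on is that $\textrm{alg}\mathcal{L}$ is a unital algebra (it contains the identity operator $I$), so that both the CSL Jordan-derivation theorem and Remark \ref{212} apply.

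First I would unpack the hypothesis: since $f$ is a generalized Jordan derivation, there is a Jordan derivation $d$ of $\textrm{alg}\mathcal{L}$ with
\begin{eqnarray*}
f(A^2)=f(A)A+Ad(A)
\end{eqnarray*}
for every $A\in\textrm{alg}\mathcal{L}$. The first substantive step is to invoke \cite{JDCSL}, which guarantees that every Jordan derivation of a CSL algebra is a derivation; hence $d$ is a derivation.

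Next I would introduce $\delta=f-d$ and verify that it is a left Jordan centralizer. A direct computation gives $\delta(A^2)=f(A^2)-d(A^2)=f(A)A+Ad(A)-\bigl(d(A)A+Ad(A)\bigr)=\delta(A)A$ for every $A\in\textrm{alg}\mathcal{L}$, so $\delta$ is indeed a left Jordan centralizer. By Remark \ref{212}, every left Jordan centralizer of a unital algebra is a left centralizer, so that $\delta(AB)=\delta(A)B$ for all $A,B\in\textrm{alg}\mathcal{L}$. Finally I would reassemble the pieces: $f(AB)=d(AB)+\delta(AB)=d(A)B+Ad(B)+\delta(A)B=f(A)B+Ad(B)$, which shows that $f$ is a generalized derivation.

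There is essentially no genuine obstacle here, since the whole argument is a formal consequence of the earlier machinery and the only nontrivial input is the external theorem that Jordan derivations of CSL algebras are derivations \cite{JDCSL}. The one point worth checking carefully is that the class of algebras matches: Corollary \ref{202} covered reflexive algebras with rich rank one operators under the lattice conditions $\vee\{F: F\in\mathcal{J}(\mathcal{L})\}=X$ or $\wedge\{L_-: L\in\mathcal{J}(\mathcal{L})\}=(0)$, whereas here the standing hypothesis is that $\mathcal{L}$ is a CSL, so I must be sure to cite \cite{JDCSL} rather than \cite{JDRA} to license the step that $d$ is a derivation.
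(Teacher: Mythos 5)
Your proposal is correct and is exactly the paper's intended argument: the paper states this corollary without a separate proof, noting only that ``every Jordan derivation of CSL algebras is a derivation \cite{JDCSL}'' and letting the proof of Corollary \ref{202} carry over verbatim, which is precisely your plan (set $\delta=f-d$, check $\delta(A^2)=\delta(A)A$, apply Remark \ref{212} in the unital algebra $\mathrm{alg}\mathcal{L}$, and reassemble $f=d+\delta$). Your closing remark about citing \cite{JDCSL} instead of \cite{JDRA} is the only point of substance that distinguishes the two corollaries, and you handled it correctly.
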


\begin{lemma}\label{204}
Let $\mathcal{A}$ be a unital algebra
and $\delta$ be a weak \emph{(}\textit{m,n,l}\emph{)}-Jordan centralizer from $\mathcal{A}$ into itself.
Then for every idempotent  $P \in \mathcal{A}$ and every $A\in\mathcal{A}$, \\
\emph{(i)}~~$\delta(P)=P\delta(I)=\delta(I)P$;\\
\emph{(ii)}~~$\delta(AP)=\delta(A)P+\lambda(AP)-\lambda(A)P$;\\
\emph{(iii)}~~$\delta(PA)=P\delta(A)+\lambda(PA)-\lambda(A)P$.
\end{lemma}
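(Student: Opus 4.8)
The plan is to treat part (i) as the core and then read off (ii) and (iii) directly from the formula (\ref{2002}) together with the commutation $P\delta(I)=\delta(I)P$ that (i) produces. Write $T=\delta(I)$ and recall from (\ref{2002}) that $\delta(X)=\alpha TX+\beta XT+\lambda(X)$ for every $X\in\mathcal{A}$, where $\alpha=\frac{m+l}{m+n+2l}$, $\beta=\frac{n+l}{m+n+2l}$ (so that $\alpha+\beta=1$) and $\lambda(X)\in\mathbb{K}I$.

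For (i), I would first apply the defining identity (\ref{101}) at $A=P$, using $P^2=P$, to obtain $(m+n+l)\delta(P)=m\delta(P)P+nP\delta(P)+lPTP+\lambda_PI$. Substituting $\delta(P)=\alpha TP+\beta PT+\lambda(P)$ into both sides and collecting the $PTP$, $TP$, $PT$, $P$ and $I$ contributions, the coefficients of $TP$ and of $PT$ both reduce to $\gamma:=\frac{(m+l)(n+l)}{m+n+2l}\neq0$ while the coefficient of $PTP$ is $2\gamma$, so the identity collapses to $\gamma\left(TP+PT-2PTP\right)=(m+n)c_PP-\left((m+n+l)c_P-\lambda_P\right)I$, where I set $\lambda(P)=c_PI$. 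The decisive move is to extract scalars by multiplying this relation on the left by $P$, on the right by $P$, and on both sides by $P$: since $P\left(TP+PT-2PTP\right)P=0$, the two-sided product forces $\lambda_P=lc_P$, and then the two one-sided products give $PT=PTP$ and $TP=PTP$. Hence $P\delta(I)=\delta(I)P=P\delta(I)P$, so $\delta(I)$ commutes with $P$ and $\delta(P)=(\alpha+\beta)TP+\lambda(P)=\delta(I)P+\lambda(P)$.

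It then remains to kill the scalar $\lambda(P)$. Feeding the commutation back into the collapsed identity makes its left-hand side vanish, leaving $(m+n)c_P(P-I)=0$; as $P-I\neq0$ in the nontrivial case this forces $(m+n)c_P=0$, whence $c_P=0$ once $m+n\neq0$, and the cases $P=0,I$ are immediate. This scalar-elimination is the step I expect to be the main obstacle: the commutation $P\delta(I)=\delta(I)P$ drops out purely formally, but vanishing of $\lambda(P)$ is exactly where one must invoke that $m+n\geq1$ and that a nontrivial idempotent is not a scalar multiple of the identity, so this case analysis deserves care. Finally, for (ii) and (iii) I would not revisit the quadratic identity at all: applying (\ref{2002}) to $AP$ and to $A$ gives $\delta(AP)-\delta(A)P=\beta A(PT-TP)+\lambda(AP)-\lambda(A)P$, and applying it to $PA$ and to $A$ gives $\delta(PA)-P\delta(A)=\alpha(TP-PT)A+\lambda(PA)-\lambda(A)P$. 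The bracketed commutators vanish by the commutation established in (i), and since each $\lambda(\cdot)$ lies in $\mathbb{K}I$ it commutes with $P$, so these two identities are precisely (ii) and (iii).
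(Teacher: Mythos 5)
Your argument is sound where it applies, and it is essentially a streamlined version of the paper's: both proofs substitute the form (\ref{2002}) into the defining identity at the idempotent $P$ and then extract information by multiplying with $P$. The difference is organizational but real: you collapse everything into the single identity $\gamma\,(TP+PT-2PTP)=(m+n)c_PP-\bigl((m+n+l)c_P-\lambda_P\bigr)I$ with $\gamma=\frac{(m+l)(n+l)}{m+n+2l}\neq0$, and read off $\lambda_P=lc_P$, $PT=PTP$ and $TP=PTP$ from the three multiplications by $P$; the paper instead derives a chain of intermediate identities (\ref{2003})--(\ref{2014}), obtaining first $\delta(P)P=P\delta(P)$ and only then the commutation $\delta(I)P=P\delta(I)$ via the substitution $P\mapsto I-P$, a trick you avoid entirely. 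Your coefficient bookkeeping is correct ($(n+l)\alpha=(m+l)\beta=\gamma$ and $m\beta+n\alpha+l=2\gamma$), the order of your deductions is right (the two-sided product must come first, since it is what makes the right-hand sides of the one-sided products vanish), and your derivation of (ii) and (iii) from (\ref{2002}) plus commutation is exactly what the paper does.

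The one genuine issue is the step you yourself flagged: killing $c_P=\lambda(P)$ requires dividing by $m+n$, and $m+n\geq1$ is \emph{not} among the paper's standing hypotheses --- the paper assumes only $m+l\geq1$ and $n+l\geq1$, which allows $m=n=0$, $l\geq1$ (the $(0,0,1)$ case is even singled out in the introduction). So, as a proof of the lemma as stated, your argument has a gap at $m=n=0$. You should know, however, that this gap is not yours alone and is in fact unfillable: the paper's own proof makes the identical division silently, at the step ``comparing (\ref{2009}) and (\ref{2010}) gives $\delta(I)P=\delta(P)P$'' --- when $m+n=0$ those two displays are the same equation and yield nothing. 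Indeed part (i) is false for $m=n=0$: on $\mathcal{A}=M_2(\mathbb{C})$ take $\mu(A)=a_{11}-a_{22}$ and $\delta(A)=A+\mu(A)I$; then $l\delta(A^2)-lA\delta(I)A=l\mu(A^2)I\in\mathbb{C}I$, so $\delta$ is a weak $(0,0,l)$-Jordan centralizer, yet $\delta(E_{11})=E_{11}+I\neq E_{11}=\delta(I)E_{11}$. (The commutation $\delta(I)P=P\delta(I)$ and parts (ii), (iii) do survive in this case, exactly as your proof shows, since they need only $\gamma\neq0$.) So your proof, with its explicit caveat, actually delimits the lemma's true range of validity more honestly than the paper's own argument does.
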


\begin{proof}
(i) Suppose $P$ is an idempotent in $\mathcal{A}$. It follows from Lemma \ref{201} that
\begin{eqnarray}
(m+n+2l)\delta(P)=(m+l)\delta(I)P+(n+l)P\delta(I)+(\lambda_{P+I}-\lambda_P)I.\label{2003}
\end{eqnarray}
Right and left multiplication of (\ref{2003}) by $P$ gives $$P\delta(P)P=P\delta(I)P+\frac{1}{m+n+2l}(\lambda_{P+I}-\lambda_P)P.$$
Since $(m+n+l)\delta(P)=m\delta(P)P+nP\delta(P)+lP\delta(I)P+\lambda_PI$,
multiplying $P$ from the right leads to
\begin{eqnarray*}
(n+l)\delta(P)P&=&n(P\delta(I)P+\frac{1}{m+n+2l}(\lambda_{P+I}-\lambda_P)P)+lP\delta(I)P+\lambda_PP\\
&=&(n+l)P\delta(I)P+(\frac{n}{m+n+2l}(\lambda_{P+I}-\lambda_P)+\lambda_P)P,
\end{eqnarray*}
whence
\begin{eqnarray}
\delta(P)P=P\delta(I)P+\varepsilon_PP\label{2007}
 \end{eqnarray}
for some $\varepsilon_P\in \mathbb{C}$.
Similarly, $P\delta(P)=P\delta(I)P+\varepsilon_P'P$ for some $\varepsilon_P'\in \mathbb{C}$. \\
Hence $\delta(P)P-\varepsilon_PP=P\delta(P)-\varepsilon_P'P$. Right and left multiplication of $P$ gives $\varepsilon_P=\varepsilon_P'$, which implies \begin{eqnarray}
\delta(P)P=P\delta(P).\label{2012}
 \end{eqnarray}

Replacing $P$ by $I-P$ in the above equation gives $\delta(I)P=P\delta(I)$.

Now, we have from (\ref{2003})
\begin{eqnarray}
\delta(P)=\delta(I)P+\frac{1}{m+n+2l}(\lambda_{P+I}-\lambda_P)I.\label{2008}
\end{eqnarray}
On the other hand, (\ref{2007}) and (\ref{2012}) yields
\begin{eqnarray*}
(m+n+l)\delta(P)&=&m\delta(P)P+nP\delta(P)+lP\delta(I)P+\lambda_PI\\
&=&(m+n+l)\delta(P)P+\lambda_PI-l\varepsilon_PP,
\end{eqnarray*}
right multiplication of which by $P$ gives $\lambda_P=l\varepsilon_P$. Hence
\begin{eqnarray}
\delta(P)=\delta(P)P+\frac{1}{m+n+l}\lambda_P(I-P)\label{2006}.
\end{eqnarray}
We then have from (\ref{2008}) that
\begin{eqnarray}
\delta(P)P=\delta(I)P+\frac{1}{m+n+2l}(\lambda_{P+I}-\lambda_{P})P \label{2014}
\end{eqnarray}
 and hence
\begin{eqnarray*}
\delta(P)=\delta(I)P+\frac{1}{m+n+2l}(\lambda_{P+I}-\lambda_{P})P+\frac{1}{m+n+l}\lambda_P(I-P),
\end{eqnarray*}
which together with (\ref{2008}) implies
$$\frac{1}{m+n+2l}(\lambda_{P+I}-\lambda_{P})=\frac{1}{m+n+l}\lambda_{P}.$$
Thus we have
\begin{eqnarray}
\delta(P)=\delta(I)P+\frac{1}{m+n+l}\lambda_{P}I,\label{2009}
\end{eqnarray}
while
\begin{eqnarray}
\delta(P)=\frac{m+n}{m+n+l}\delta(P)P+\frac{l}{m+n+l}\delta(I)P+\frac{1}{m+n+l}\lambda_{P}I. \label{2010}
\end{eqnarray}
Comparing (\ref{2009}) and (\ref{2010}) gives $$\delta(I)P=\delta(P)P.$$
This together with (\ref{2014}) gives
$$\lambda(P)=\frac{1}{m+n+2l}(\lambda_{P+I}-\lambda_{P})I=\frac{1}{m+n+l}\lambda_{P}I=0,$$ whence
$$\delta(P)=\delta(I)P=P\delta(I).$$
(ii) By Lemma \ref{201} and (i), we have
\begin{eqnarray*}
\delta(AP)&=&\frac{m+l}{m+n+2l}\delta(I)AP+\frac{n+l}{m+n+2l}AP\delta(I)+\lambda(AP)\\
&=&(\frac{m+l}{m+n+2l}\delta(I)A+\frac{n+l}{m+n+2l}A\delta(I))P+\lambda(AP)\\
&=&(\delta(A)-\lambda(A))P+\lambda(AP)\\
&=&\delta(A)P+\lambda(AP)-\lambda(A)P.
\end{eqnarray*}

(iii) The proof is analogous to the proof of (ii).
\end{proof}

An subset $\mathcal{I}$ of an algebra $\mathcal{A}$ is called a \emph{left separating set} of $\mathcal{A}$ if for every $A\in \mathcal{A}$, $A\mathcal{I}={0}$ implies $A=0$. We have the following simple but noteworthy result.

\begin{corollary}\label{205}
Suppose $\mathcal{I}$ is a left separating left ideal of a unital algebra $\mathcal{A}$ and is contained in the algebra generated by all idempotents in $\mathcal{A}$. Then each weak \emph{(}\textit{m,n,l}\emph{)}-Jordan centralizer $\delta$ from $\mathcal{A}$ into itself is a centralizer.
\end{corollary}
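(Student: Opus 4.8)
The plan is to isolate $\delta(I)$, prove it is central, and then reduce the entire statement to the vanishing of the scalar corrections $\lambda(A)$. The separating hypothesis on $\mathcal I$ is exactly what converts the idempotent information of Lemma~\ref{204} into a global statement about $\mathcal A$.

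First I would show that $\delta(I)$ is central. By Lemma~\ref{204}(i) the element $\delta(I)$ commutes with every idempotent of $\mathcal A$, hence with the subalgebra generated by all idempotents, and in particular $\delta(I)T=T\delta(I)$ for every $T\in\mathcal I$. Fixing $A\in\mathcal A$ and $T\in\mathcal I$ and using that $AT\in\mathcal I$ (since $\mathcal I$ is a left ideal), I would compute
\[
(\delta(I)A-A\delta(I))T=\delta(I)(AT)-A(T\delta(I))=(AT)\delta(I)-AT\delta(I)=0.
\]
As this holds for all $T\in\mathcal I$ and $\mathcal I$ is left separating, $\delta(I)A=A\delta(I)$ for every $A$, so $\delta(I)$ lies in the center of $\mathcal A$.

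With $\delta(I)$ central, formula (\ref{2002}) of Lemma~\ref{201} collapses to $\delta(A)=\delta(I)A+\lambda(A)=A\delta(I)+\lambda(A)$ with $\lambda(A)\in\mathbb{K}I$, so the whole problem reduces to proving $\lambda(A)=0$ for all $A$: once $\delta(A)=\delta(I)A=A\delta(I)$ with $\delta(I)$ central, one reads off $\delta(AB)=\delta(I)AB=\delta(A)B$ and $\delta(AB)=AB\delta(I)=A\delta(B)$, i.e.\ $\delta$ is a centralizer. To kill $\lambda$ I would substitute $\delta(A)=\delta(I)A+\lambda(A)$ back into the defining identity (\ref{101}); because $\delta(I)$ is central and $\lambda(A)$ is a multiple of $I$, all $\delta(I)$-terms cancel and one is left with $(m+n)\lambda(A)A\in\mathbb{K}I$ for every $A$. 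When $m+n\geq1$ this is decisive: for $A\notin\mathbb{K}I$ the elements $I$ and $A$ are linearly independent, forcing $\lambda(A)=0$, while for $A\in\mathbb{K}I$ linearity and $\lambda_I=0$ give $\lambda(A)=0$ directly.

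The remaining case $m=n=0$ (hence $l\geq1$) is where I expect the real difficulty to lie, since there the quadratic identity degenerates and no longer constrains $\lambda$. Here I would return to the idempotents: Lemma~\ref{204}(i) gives $\lambda(P)=0$ and $\delta(P)=\delta(I)P$ for each idempotent $P$, and iterating Lemma~\ref{204}(ii) along a product $W=P_1\cdots P_k$ yields $\delta(AW)=\delta(A)W+\lambda(AW)-\lambda(A)W$ (the intermediate $\lambda(AW)P$-terms cancel at each step), hence the same identity for every $T\in\mathcal I$ by linearity. Comparing $\delta(A(BT))$ with $\delta((AB)T)$ for $T\in\mathcal I$ and then invoking that $\mathcal I$ is left separating is the mechanism I would use to promote a left-centralizer identity on $\mathcal A\times\mathcal I$ to one on all of $\mathcal A\times\mathcal A$, whence $\lambda\equiv0$. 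I expect this final step to be the main obstacle: because the Jordan-type hypothesis is symmetric, it only controls the symmetric corrections $\lambda(PQ+QP)$, so in the case $m=n=0$ all of the force of the conclusion must come from the normalization $\lambda(P)=0$ of Lemma~\ref{204}(i) together with the separating ideal $\mathcal I$, and extracting the asymmetric information needed to conclude $\lambda=0$ is the delicate point.
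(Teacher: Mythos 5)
For $m+n\geq 1$ your proof is correct, and its skeleton is the paper's: Lemma~\ref{204}(i) places $\delta(I)$ in the commutant of $\mathcal{I}$, left separation upgrades that to centrality, and substituting (\ref{2002}) back into the defining identity (\ref{101}) kills $\lambda$. But your route to centrality is genuinely more direct. The paper first proves $\lambda(A)=0$, hence $\delta(A)=\delta(I)A=A\delta(I)$, for $A\in\mathcal{I}$, and only then gets centrality from the chain $A\delta(I)\mathcal{I}=A\mathcal{I}\delta(I)=\delta(A\mathcal{I})=\delta(I)A\mathcal{I}$, which needs $\delta$ to act as multiplication by $\delta(I)$ on $\mathcal{I}$; your computation $(\delta(I)A-A\delta(I))T=\delta(I)(AT)-A(T\delta(I))=(AT)\delta(I)-(AT)\delta(I)=0$ uses only the commutant and left-ideal properties, so centrality comes first and $\lambda$ is disposed of once, globally. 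You also make explicit a factor the paper suppresses: the substitution leaves $(m+n)\lambda(A)A\in\mathbb{K}I$, whereas the paper's proof asserts ``which implies $\lambda(A)A=kI$'' with the coefficient $m+n$ silently dropped.

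Your instinct about the case $m=n=0$ (hence $l\geq 1$) is exactly right, and the situation is worse than a missing idea: the corollary as stated is false there, so neither your sketch nor the paper's argument can close that case. On $\mathcal{A}=\mathcal{I}=M_2(\mathbb{C})$ (unital, left separating, generated by its idempotents), take $\delta(A)=A+f(A)I$ for a nonzero linear functional $f$ with $f(I)=0$. Then $\delta(A^2)-A\delta(I)A=f(A^2)I\in\mathbb{K}I$, so $\delta$ is a weak $(0,0,1)$-Jordan centralizer, yet $\delta(E_{11}E_{12})=E_{12}+f(E_{12})I$ while $\delta(E_{11})E_{12}=E_{12}+f(E_{11})E_{12}$, and these differ for suitable $f$. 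This example also destroys the tool your proposed patch leans on: here $\delta(P)=P+f(P)I\neq\delta(I)P$ and $\lambda(P)=f(P)\neq 0$ for suitable idempotents $P$, so Lemma~\ref{204}(i) itself fails for weak $(0,0,l)$-Jordan centralizers; its proof breaks precisely at the comparison of (\ref{2009}) and (\ref{2010}), which only yields $(m+n)\bigl(\delta(I)P-\delta(P)P\bigr)=0$. In short, your proposal is complete and correct exactly on the range $m+n\geq 1$ where the paper's own proof is valid, and the ``delicate point'' you flagged is an error in the paper --- the corollary needs the additional hypothesis $m+n\geq 1$ (or $\lambda_A\equiv 0$, i.e.\ a genuine $(m,n,l)$-Jordan centralizer) --- not a gap you should be expected to fill.
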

\begin{proof}
Since $\mathcal{I}$ is contained in the algebra generated by all idempotents in $\mathcal{A}$ and by (i) of Lemma \ref{204}, we have that $\delta(I)\in \mathcal{I}'$, where $\mathcal{I}'$ denotes the commutant of $\mathcal{I}$. Hence $\delta(A)=\delta(I)A+\lambda(A)=A\delta(I)+\lambda(A)$ for every $A\in \mathcal{I}$ according to (\ref{2002}). For any $A(\neq \mathbb{K}I)\in\mathcal{I}$, we have
\begin{eqnarray*}
&&(m+n+l)(\delta(I)A^2+\lambda(A^2))\\
&=&(m+n+l)\delta(A^2)\\
&=&m\delta(A)A+nA\delta(A)+lA\delta(I)A+\lambda_AI\\
&=&m(\delta(I)A^2+\lambda(A)A)+n(A^2\delta(I)+A\lambda(A))+lA^2\delta(I)+\lambda_AI,
\end{eqnarray*}
which implies $\lambda(A)A=kI$ for some $k\in\mathbb{K}$. Hence $\lambda(A)=0$ and $\delta(A)=\delta(I)A=A\delta(I)$ for every $A\in \mathcal{I}$.
Then Lemma \ref{204} yields $A\delta(I)\mathcal{I}=A\mathcal{I}\delta(I)=\delta(A\mathcal{I})=\delta(I)A\mathcal{I}$, and since $\mathcal{I}$
is a separating left ideal, we have $A\delta(I)=\delta(I)A$ for every $A\in \mathcal{A}$.
Therefore,
$\delta(A)=\delta(I)A+\lambda(A)=A\delta(I)+\lambda(A)$ for every $A\in \mathcal{A}$. Now by the same argument as above, we have that $\delta(A)=\delta(I)A=A\delta(I)$ for every $A\in \mathcal{A}$ and this completes the proof.
\end{proof}

\begin{remark}
\emph{By \cite[Proposition 2.2]{Hadwin}, \cite[Example 6.2]{Samei}, we see that the class of algebras we discussed in Corollary \ref{205} contains a lot of algebras and is therefore very large.}

\end{remark}

The proof of the following lemma is similar to the proof of \cite[Proposition 1.1]{LDRA} and we omit it.

\begin{lemma}\label{206}
Let $E$ and $F$ be non-zero subspaces of $X$ and $X^*$ respectively.
Let $\phi: E\times F\rightarrow B(X)$ be a bilinear mapping such that
$\phi(x,f)X\subseteq \mathbb{K}x$ for all $x\in E$ and $f\in F$.
Then there exists a linear mapping $S: F\rightarrow X^*$ such that
$\phi(x,f)=x\otimes Sf$ for all $x\in E$ and $f\in F$.
\end{lemma}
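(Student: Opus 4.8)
The plan is to first pin down, for each fixed nonzero $x\in E$ and $f\in F$, the single functional that the operator $\phi(x,f)$ represents, and then to show via bilinearity that this functional is in fact independent of $x$. Fix $x\in E$ with $x\neq0$ and $f\in F$. The hypothesis $\phi(x,f)X\subseteq\mathbb{K}x$ says that for every $y\in X$ there is a scalar $g_{x,f}(y)\in\mathbb{K}$ with $\phi(x,f)(y)=g_{x,f}(y)\,x$; since $x\neq0$ this scalar is uniquely determined by $y$, and because $y\mapsto\phi(x,f)(y)$ is linear and bounded, the map $y\mapsto g_{x,f}(y)$ is a bounded linear functional, i.e. $g_{x,f}\in X^*$ with $\|g_{x,f}\|=\|\phi(x,f)\|/\|x\|$. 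Thus $\phi(x,f)=x\otimes g_{x,f}$ for every nonzero $x\in E$ and every $f\in F$.

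The heart of the argument is to prove that $g_{x,f}$ does not depend on $x$. I would first treat two linearly independent vectors $x_1,x_2\in E$: applying the identity $\phi(x_1+x_2,f)=\phi(x_1,f)+\phi(x_2,f)$ to an arbitrary $y\in X$ gives $g_{x_1+x_2,f}(y)(x_1+x_2)=g_{x_1,f}(y)\,x_1+g_{x_2,f}(y)\,x_2$, and comparing the (unique) coefficients of the linearly independent vectors $x_1$ and $x_2$ forces $g_{x_1,f}=g_{x_2,f}$. For two parallel nonzero vectors I would note that $\phi(\lambda x,f)=\lambda\,\phi(x,f)=(\lambda x)\otimes g_{x,f}$ already yields $g_{\lambda x,f}=g_{x,f}$ for $\lambda\neq0$; and when $\dim E\geq2$ any two parallel nonzero vectors can be compared through a third vector of $E$ independent of both. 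Hence $g_{x,f}$ is one and the same functional for all nonzero $x\in E$ (the case $\dim E=1$ being covered by the scaling remark alone), so I may unambiguously define $Sf:=g_{x,f}$.

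It then remains to check that $S\colon F\to X^*$ is linear and that the asserted formula holds. Fixing any nonzero $x\in E$, bilinearity of $\phi$ makes $f\mapsto\phi(x,f)=x\otimes Sf$ additive and homogeneous, and since $x\neq0$ this transfers to additivity and homogeneity of $f\mapsto Sf$; the boundedness of each $Sf$ was already established above. Finally $\phi(x,f)=x\otimes Sf$ holds for all nonzero $x\in E$ by construction and trivially for $x=0$, which completes the proof. I expect the only genuine obstacle to be the coefficient-comparison step, where the linearly dependent vectors must be handled separately from the independent ones; everything else is routine bookkeeping with rank one operators.
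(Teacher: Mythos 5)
Your proof is correct. The paper itself gives no argument for this lemma — it only remarks that the proof is similar to that of \cite[Proposition 1.1]{LDRA} and omits it — and your proof is exactly that standard argument: use the hypothesis $\phi(x,f)X\subseteq\mathbb{K}x$ together with boundedness of $\phi(x,f)$ to extract the functional $g_{x,f}\in X^*$, show $g_{x,f}$ is independent of $x$ by treating linearly independent vectors (coefficient comparison) and parallel vectors (homogeneity) separately, and then transfer bilinearity of $\phi$ into linearity of $S$.
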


\begin{lemma}\label{207}
Let $\mathcal{L}$ be a subspace lattice on a Banach space X
and $\delta$ be a weak \emph{(}\textit{m,n,l}\emph{)}-Jordan centralizer from $alg\mathcal{L}$ into itself.
Suppose that $E$ and $L$ are in $\mathcal{J}(\mathcal{L})$ such that $E_{-}\ngeq L$.
Let $x$ be in E and $f$ be in $L_-^\perp$. Then $(\delta(x\otimes f)-\lambda(x\otimes f))X\subseteq \mathbb{K}x$.
\end{lemma}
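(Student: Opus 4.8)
The plan is to use the abundance of rank one operators in $\mathrm{alg}\,\mathcal{L}$ to factor $x\otimes f$ as a product $P\cdot B$, where $P$ is a rank one \emph{idempotent} with range $\mathbb{K}x$ and $B\in\mathrm{alg}\,\mathcal{L}$; the conclusion should then drop out of Lemma~\ref{204}(iii). Throughout I may assume $x\neq0$ and $f\neq0$, the case $x\otimes f=0$ being trivial since $\delta(0)=0=\lambda(0)$.

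First I would check that $x\otimes f$ really belongs to $\mathrm{alg}\,\mathcal{L}$, so that $\delta(x\otimes f)$ is meaningful. The hypothesis $E_-\ngeq L$ means $L\nsubseteq E_-$, so I can pick $u\in L$ with $u\notin E_-$; since $E_-$ is closed, there is $g\in E_-^{\perp}$ with $g(u)=1$. Because $x\in E$ and $g\in E_-^{\perp}$, the rank one criterion (with $K=E$) gives $x\otimes g\in\mathrm{alg}\,\mathcal{L}$, and because $u\in L$ and $f\in L_-^{\perp}$ (with $K=L$) it gives $u\otimes f\in\mathrm{alg}\,\mathcal{L}$. As $(x\otimes g)(u\otimes f)=g(u)\,x\otimes f=x\otimes f$, it follows that $x\otimes f\in\mathrm{alg}\,\mathcal{L}$.

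For the main step, suppose first $x\notin E_-$. Then the images of $x$ and $u$ in $X/E_-$ are both non-zero, so I can choose $g_0\in E_-^{\perp}$ with $g_0(x)=1$ and $g_0(u)\neq0$. Now $P:=x\otimes g_0$ satisfies $P^2=g_0(x)P=P$, hence is an idempotent of $\mathrm{alg}\,\mathcal{L}$ with range $\mathbb{K}x$, and $B:=\frac{1}{g_0(u)}\,u\otimes f\in\mathrm{alg}\,\mathcal{L}$ satisfies $PB=x\otimes f$. Applying Lemma~\ref{204}(iii) to $P$ and $B$ yields $\delta(x\otimes f)=\delta(PB)=P\delta(B)+\lambda(PB)-\lambda(B)P$, and since $\lambda(PB)=\lambda(x\otimes f)$ I obtain $\delta(x\otimes f)-\lambda(x\otimes f)=P\delta(B)-\lambda(B)P$. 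Recalling $\lambda(B)\in\mathbb{K}I$, both terms on the right have range inside $\mathrm{ran}\,P=\mathbb{K}x$, which is precisely the assertion.

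The case $x\in E\cap E_-$ is where I expect the real difficulty, since then every $g_0\in E_-^{\perp}$ kills $x$ and no idempotent of range $\mathbb{K}x$ can be built through $E$. Here I would fall back on formula~(\ref{2002}), which gives $\delta(x\otimes f)-\lambda(x\otimes f)=\frac{m+l}{m+n+2l}\,(\delta(I)x)\otimes f+\frac{n+l}{m+n+2l}\,x\otimes(\delta(I)^{*}f)$; the second summand already has range in $\mathbb{K}x$, so the whole problem reduces to showing $\delta(I)x\in\mathbb{K}x$. By Lemma~\ref{204}(i), $\delta(I)$ commutes with every idempotent, and applying this to the idempotents $P=x'\otimes g_0$ of the previous paragraph shows $\delta(I)x'\in\mathbb{K}x'$ for every $x'\in E\setminus E_-$. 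A short linearity (Vandermonde) argument, writing a degenerate $x\in E\cap E_-$ through the non-degenerate vectors $x+x'$ and $x+2x'$ with $x'\in E\setminus E_-$, then forces $\delta(I)x\in\mathbb{K}x$ as well, completing the proof. The one thing that must be secured for this last step is the existence of some $x'\in E\setminus E_-$, that is, $E\nsubseteq E_-$.
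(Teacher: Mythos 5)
There is a genuine gap, and it is exactly the point you flagged at the end: both halves of your argument require $E\nsubseteq E_-$, and this does not follow from the hypotheses of Lemma \ref{207}. Your Case 1 needs $x\notin E_-$, and your Case 2 needs some $x'\in E\setminus E_-$ to run the commuting-idempotent/Vandermonde argument; so if $E\subseteq E_-$ your proof covers no vectors at all. But $E\in\mathcal{J}(\mathcal{L})$ only says $E\neq(0)$ and $E_-\neq X$, and $E_-\ngeq L$ only says $L\nsubseteq E_-$; neither forbids $E\subseteq E_-$. A standard example where this happens is a continuous nest, e.g.\ $\mathcal{L}=\{L^2[0,t]:0\leq t\leq1\}$ on $X=L^2[0,1]$, where $K_-=K$ for every $K\neq(0)$. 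Taking $E=L^2[0,1/2]$ and $L=L^2[0,3/4]$, all hypotheses of the lemma hold, yet $E\setminus E_-=\emptyset$; worse, the algebra $\mathrm{alg}\,\mathcal{L}$ then contains \emph{no} rank-one idempotent whatsoever (any $x'\otimes g_0\in\mathrm{alg}\,\mathcal{L}$ has $x'\in K$, $g_0\in K_-^\perp=K^\perp$, forcing $g_0(x')=0$), so no repair built on rank-one idempotents can work. This case is not marginal for the paper: continuous nests satisfy $\vee\{F:F\in\mathcal{J}(\mathcal{L})\}=X$, so Theorem \ref{208} depends on Lemma \ref{207} precisely in this regime.

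The paper's proof is designed to avoid this obstruction. Its case split is on $f(x)$, not on the position of $x$ relative to $E_-$: when $f(x)\neq0$ the operator $\frac{1}{f(x)}x\otimes f$ is itself an idempotent (this overlaps with your Case 1, and your factorization $x\otimes f=PB$ with Lemma \ref{204}(iii) is a clean shortcut where it applies); when $f(x)=0$ it chooses $z\in L\setminus E_-$ and $g\in E_-^\perp$ with $g(z)=1$ --- note this only requires $g$ to be nonzero at a vector of $L$, which is guaranteed by $E_-\ngeq L$ alone, never at a vector of $E$ --- writes $x\otimes f=(x\otimes g)(z\otimes f)+(z\otimes f)(x\otimes g)$, and applies the polarization identity of Lemma \ref{201} to this sum. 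Comparing the result with (\ref{2002}) yields the relation (\ref{2004}), and a case analysis on the scalars $g(x)$ and $f(z)$ shows $f(\delta(I)x)=\lambda=0$, whence $\delta(I)x\otimes f=2g(\delta(I)z)x\otimes f-x\otimes f\delta(I)$; substituting back into (\ref{2002}) gives $(\delta(x\otimes f)-\lambda(x\otimes f))X\subseteq\mathbb{K}x$. In short: your reduction in Case 2 to proving $\delta(I)x\in\mathbb{K}x$ is sound, but establishing that fact without rank-one idempotents is the actual content of the lemma, and it is what the paper's computation supplies.
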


\begin{proof}
Since $E_{-}\ngeq L$, we have that $E\leq L$. So $x\otimes f\in \textrm{alg}\mathcal{L}$. Suppose $f(x)\neq0$,
it follows from Lemmas \ref{201} and \ref{204} that $\lambda(x\otimes f)=0$ and $\delta(x\otimes f)=x\otimes f\delta(I)$. Thus $\delta(x\otimes f)X\subseteq \mathbb{K}x$.
\

Now we assume $f(x)=0$. Choose $z$ from $L$ and $g$ from $E_-^\perp$ such that $g(z)=1$. Then
\begin{eqnarray*}
&&(m+n+2l)(m+n+l)\delta(x\otimes f)\\
&&~~~~~~~=(m+n+2l)(m+n+l)\delta((x\otimes g)(z\otimes f)+(z\otimes f)(x\otimes g))\\
&&~~~~~~~=(m+n+2l)(m\delta(x\otimes g)(z\otimes f)+n(x\otimes g)\delta(z\otimes f)+l(x\otimes g)\delta(I)(z\otimes f))\\
&&~~~~~~~~~~+(m+n+2l)(m\delta(z\otimes f)(x\otimes g)+n(z\otimes f)\delta(x\otimes g)+l(z\otimes f)\delta(I)(x\otimes g))\\
&&~~~~~~~~~~+(m+n+2l)(\lambda_{x\otimes g+z\otimes f}-\lambda_{x\otimes g}-\lambda_{z\otimes f})I\\
&&~~~~~~~=(m^2+ml)\delta(I)x\otimes f+(n^2+nl)x\otimes f\delta(I)\\
&&~~~~~~~~~~+2(mn+ml+nl+l^2)(x\otimes g\delta(I)z\otimes f+z\otimes f\delta(I)x\otimes g)+\lambda_1I
\end{eqnarray*}
for some $\lambda_1\in \mathbb{K}$.

On the other hand,
\begin{eqnarray*}
&&(m+2n+l)(m+n+l)\delta(x\otimes f)\\
&&~~~=(m+n+l)((m+l)\delta(I)x\otimes f+(n+l)x\otimes f\delta(I)+(\lambda_{x \otimes f+I}-\lambda_{x\otimes f})I)\\
&&~~~=(m^2+2ml+l^2+mn+nl)\delta(I)x\otimes f+(ml+mn+l^2+2nl+n^2)x\otimes f\delta(I)+\lambda_2I
\end{eqnarray*}
for some $\lambda_2 \in \mathbb{K}$.

So
\begin{eqnarray}
\delta(I)x\otimes f+x\otimes f\delta(I)=2x\otimes g\delta(I)z\otimes f+2z\otimes f\delta(I)x\otimes g+ \lambda I \label{2004}
\end{eqnarray}
for some $\lambda\in \mathbb{K}$.

Notice that this equation is valid for all $z$ in $L$ satisfying $g(z)=1$. Applying (\ref{2004}) to $x$, we have
\begin{eqnarray}
f(\delta(I)x)x=2g(x)f(\delta(I)x)z+\lambda x.\label{2005}
\end{eqnarray}

If $g(x)=0$ and $f(z)=0$, then $f(\delta(I)x)=\lambda$. Substituting $z+x$ for $z$ in (\ref{2004}) gives
\begin{eqnarray}
\delta(I)x\otimes f+x\otimes f\delta(I)=2x\otimes g\delta(I)(z+x)\otimes f+2\lambda(z+x)\otimes g+\lambda I\label{2011}.
\end{eqnarray}
Comparing (\ref{2004}) with (\ref{2011}) yields
\begin{eqnarray}
g(\delta(I)x)x\otimes f+\lambda x\otimes g=0.
\end{eqnarray}
Applying this equation to $z$ yields $\lambda x=0,$ which means $f(\delta(I)x)=\lambda=0$.

If $g(x)=0$ and $f(z)\neq0$, we also have $f(\delta(I)x)=\lambda$, and it follows from Lemma \ref{204} that
\begin{eqnarray*}
\delta(I)x\otimes f+x\otimes f\delta(I)&=&2x\otimes g\delta(I)z\otimes f+2z\otimes f\delta(I)x\otimes g+\lambda I\\
&=&2(x\otimes g)(z\otimes f)\delta(I)+2\delta(I)(z\otimes f)(x\otimes g)+\lambda I\\
&=&2x\otimes f\delta(I)+\lambda I,
\end{eqnarray*}
whence $$\delta(I)x\otimes f=x\otimes f\delta(I)+\lambda I.$$
Applying the above equation to $x$ leads to $f(\delta(I)x)=-\lambda$. Hence $f(\delta(I)x)=\lambda=0$.

If $g(x)\neq 0$, replacing $z$ by $\frac{1}{g(x)}x$ in (\ref{2005}) gives $f(\delta(I)x)=-\lambda$,
while
\begin{eqnarray}
\delta(I)x\otimes f+x\otimes f\delta(I)&=&2x\otimes g\delta(I)z\otimes f+2 z\otimes f\delta(I)x\otimes g+\lambda I\nonumber\\
&=&2\delta(I)(x\otimes g) (z\otimes f)+2 (z\otimes f) (x\otimes g)\delta(I)+\lambda I\nonumber\\
&=&2\delta(I)(x\otimes f)+\lambda I.\nonumber
\end{eqnarray}
Hence
\begin{eqnarray}
x\otimes f\delta(I)=\delta(I)x\otimes f+\lambda I.\label{2013}
\end{eqnarray}
Applying (\ref{2013}) to $x$ yields $f(\delta(I)x)=\lambda$. Therefore, $f(\delta(I)x)=\lambda=0$.

So by (\ref{2004}), we obtain $\delta(I)x\otimes f=2g(\delta(I)z)x\otimes f-x\otimes f\delta(I)$.
It follows from Lemma \ref{201} that
\begin{eqnarray*}
\delta(x\otimes f)&=&\frac{m+l}{m+n+2l}\delta(I)(x\otimes f)+\frac{n+l}{m+n+2l}(x\otimes f)\delta(I)+\lambda(x\otimes f)\\
&=&\frac{m+l}{m+n+2l}(2g(\delta(I)z)x\otimes f-x\otimes f\delta(I))+\frac{n+l}{m+n+2l}(x\otimes f)\delta(I)+\lambda(x\otimes f)\\
&=&\frac{2(m+l)}{m+n+2l}g(\delta(I)z)x\otimes f+\frac{n-m}{m+n+2l}(x\otimes f)\delta(I)+\lambda(x\otimes f).
\end{eqnarray*}
Hence $(\delta(x\otimes f)-\lambda(x\otimes f))X\subseteq \mathbb{K}x$.
\end{proof}

\begin{theorem}\label{208}
Let $\mathcal{L}$ be a subspace lattice on a Banach space $X$ satisfying $\vee\{F: F\in \mathcal{J}(\mathcal{L})\}=X$.
If $\delta$ is a weak $(m,n,l)$-Jordan centralizer from $alg\mathcal{L}$ into itself, then $\delta$ is a centralizer.
In particular, the conclusion holds if $\mathcal{L}$ has the property $X_-\neq X$.
\end{theorem}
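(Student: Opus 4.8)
The plan is to reduce the theorem to two assertions: that $\delta(I)$ is central in $\mathrm{alg}\mathcal{L}$ and that $\lambda$ vanishes identically. Observe first that by (\ref{2002}) the assignment $A\mapsto\lambda(A)$ is linear, being the difference of the linear maps $\delta$, $A\mapsto\delta(I)A$ and $A\mapsto A\delta(I)$; I will use this freely. If I can show $\delta(I)A=A\delta(I)$ and $\lambda(A)=0$ for all $A$, then (\ref{2002}) reduces to $\delta(A)=\delta(I)A=A\delta(I)$ (here $m+l\geq1$, $n+l\geq1$ are used), and then $\delta(AB)=\delta(I)AB=\delta(A)B$ and $\delta(AB)=AB\delta(I)=A\delta(B)$, so $\delta$ is a centralizer. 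Thus everything is controlled by the single operator $\delta(I)$ and the functional $\lambda$, and the strategy is to read both off from the action of $\delta$ on rank one operators.

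First I would determine $\delta$ on rank ones. Fix $K\in\mathcal{J}(\mathcal{L})$ and consider $\phi(y,h)=\delta(y\otimes h)-\lambda(y\otimes h)$ for $y\in K$, $h\in K_-^\perp$; this is bilinear because $\delta$ and $\lambda$ are linear and $y\otimes h$ is bilinear. Lemma \ref{207} gives $\phi(y,h)X\subseteq\mathbb{K}y$, so Lemma \ref{206} produces a linear $S\colon K_-^\perp\to X^*$ with $\delta(y\otimes h)=y\otimes Sh+\lambda(y\otimes h)$. To identify $S$ I specialise to the case $h(y)=1$, where $y\otimes h$ is idempotent: Lemma \ref{204}(i) then gives $\delta(y\otimes h)=(y\otimes h)\delta(I)=y\otimes(\delta(I)^{*}h)$ together with $\lambda(y\otimes h)=0$, forcing $Sh=\delta(I)^{*}h$ on a spanning set of functionals and hence $S=\delta(I)^{*}$. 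Since $\lambda$ is linear and already vanishes whenever $h(y)\neq0$, it vanishes on every rank one of $\mathrm{alg}\mathcal{L}$. Consequently $\delta(y\otimes h)=(y\otimes h)\delta(I)$ for all rank ones, and substituting this into (\ref{2002}) (again using $m+l\geq1$) yields $\delta(I)(y\otimes h)=(y\otimes h)\delta(I)$: the operator $\delta(I)$ commutes with every rank one operator in the algebra.

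Next I would upgrade this to centrality. Commutation of $\delta(I)$ with $y\otimes h$ reads $(\delta(I)y)\otimes h=y\otimes(\delta(I)^{*}h)$, which for $y\otimes h\neq0$ forces $\delta(I)y\in\mathbb{K}y$; hence every nonzero vector lying in some $K\in\mathcal{J}(\mathcal{L})$ is an eigenvector of $\delta(I)$. For a fixed $K$ this makes $\delta(I)|_{K}$ a scalar $c_{K}$, since for independent $y_{1},y_{2}\in K$ the vectors $y_{1},y_{2},y_{1}+y_{2}$ are all eigenvectors and independence forces equal eigenvalues. As every $A\in\mathrm{alg}\mathcal{L}$ leaves $K$ invariant and $\delta(I)|_{K}=c_{K}\,\mathrm{id}$, we get $(\delta(I)A-A\delta(I))|_{K}=0$ for each $K$; the hypothesis $\vee\{K:K\in\mathcal{J}(\mathcal{L})\}=X$ and continuity then give $\delta(I)A=A\delta(I)$ for all $A$. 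Together with $\lambda\equiv0$ this completes the proof by the reduction in the first paragraph. The concluding remark is immediate: if $X_-\neq X$ then $X\in\mathcal{J}(\mathcal{L})$, so $\vee\{K:K\in\mathcal{J}(\mathcal{L})\}=X$ holds automatically.

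I expect the main obstacle to lie in the rank one step for the degenerate functionals, that is, for nilpotent rank ones $y\otimes h$ with $h(y)=0$ (those based at a $K$ with $K\subseteq K_-$). For these the idempotent shortcut is unavailable, and one must invoke Lemma \ref{207} through an auxiliary pair $E\leq L$ with $E_-\ngeq L$ rather than with $E=L=K$, and then check that the formula $\delta(y\otimes h)=(y\otimes h)\delta(I)$ and the vanishing of $\lambda$ persist there. This is precisely what is needed to guarantee that the eigenvector conclusion reaches every $K\in\mathcal{J}(\mathcal{L})$, and hence that the spanning hypothesis $\vee\{K\}=X$ can be brought to bear; the linearity of $\lambda$ and the eigenvalue-matching argument are the tools that make this uniform across degenerate and non-degenerate rank ones alike.
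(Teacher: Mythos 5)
Your overall architecture is the paper's (rank ones via Lemmas \ref{206} and \ref{207}, scalar action of $\delta(I)$, centrality from the join hypothesis, then the reduction through (\ref{2002})), and two pieces of your proposal are sound: the passage from ``$\delta(I)$ acts as a scalar on each $K\in\mathcal{J}(\mathcal{L})$'' to centrality (scalar action plus invariance plus density plus boundedness of the commutator) is correct and in fact a little cleaner than the paper's rank-one manipulation, and the closing remark that $X_-\neq X$ puts $X$ itself in $\mathcal{J}(\mathcal{L})$ is right. But there are two genuine gaps. The first is the one you flag and then defer: you invoke Lemma \ref{207} with $E=L=K$, which requires $K_-\ngeq K$, and this can fail for $K\in\mathcal{J}(\mathcal{L})$ (the definition only guarantees $K\neq(0)$ and $K_-\neq X$; $K\leq K_-$ is perfectly possible). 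For such $K$ every $h\in K_-^\perp$ annihilates every $y\in K$, so there are \emph{no} rank ones based at $K$ with $h(y)\neq 0$: your identification $S=\delta(I)^*$ via idempotents, your linearity trick for killing $\lambda$ on rank ones, and your derivation of $\delta(I)y\in\mathbb{K}y$ from commutation all have nothing to act on. This is not a routine check to be appended, because your logical order (idempotents $\Rightarrow$ $S=\delta(I)^*$ $\Rightarrow$ commutation $\Rightarrow$ eigenvectors) runs exactly backwards in the degenerate case. The paper's mechanism is what is needed: for each $E\in\mathcal{J}(\mathcal{L})$ the hypothesis $\vee\{F:F\in\mathcal{J}(\mathcal{L})\}=X$ produces $L\in\mathcal{J}(\mathcal{L})$ with $E_-\ngeq L$ (otherwise every such $L$ lies below $E_-$ and then $E_-=X$), hence $E\leq L$; Lemmas \ref{206} and \ref{207} applied to the pair $(E,L)$ give $\delta(x\otimes f)-\lambda(x\otimes f)=x\otimes Sf$ for $x\in E$, $f\in L_-^\perp$ with \emph{no} identification of $S$, and then (\ref{2002}) with $m+l\geq 1$ yields $\frac{m+l}{m+n+2l}\,\delta(I)x\otimes f=x\otimes\bigl(Sf-\frac{n+l}{m+n+2l}\delta(I)^*f\bigr)$, forcing $\delta(I)x\in\mathbb{K}x$ for every $x\in E$, degenerate or not. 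With that step supplied, your own centrality argument goes through.

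The second gap is independent of the first: your reduction needs $\lambda(A)=0$ for \emph{every} $A\in\mathrm{alg}\mathcal{L}$, but what you establish (even granting the first gap repaired) is only that $\lambda$ vanishes on rank one operators, and you then silently pass to ``$\lambda\equiv 0$.'' Rank ones do not span $\mathrm{alg}\mathcal{L}$, and neither $\delta$ nor $\lambda$ carries any continuity hypothesis, so nothing propagates from rank ones to general elements. Note that a linear map of the form $A\mapsto\delta(I)A+\lambda(A)$ is a centralizer only if $\lambda\equiv 0$ (a centralizer must satisfy $\delta(A)=\delta(IA)=\delta(I)A$), so proving $\lambda\equiv 0$ on the whole algebra is precisely what remains. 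The paper closes this by the argument of Corollary \ref{205}: once $\delta(A)=\delta(I)A+\lambda(A)=A\delta(I)+\lambda(A)$ holds for all $A$ with $\delta(I)$ central, substituting back into the defining identity (\ref{101}) gives $(m+n+l)\lambda(A^2)I=(m+n)\lambda(A)A+\lambda_AI$, whence $(m+n)\lambda(A)A\in\mathbb{K}I$ and so $\lambda(A)=0$ for $A\notin\mathbb{K}I$, the scalar case being immediate. This final computation, which is where the weak Jordan-centralizer identity is used one last time, is absent from your proposal.
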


\begin{proof}
Let $E$ be in $\mathcal{J}(\mathcal{L})$. By $\vee\{F: F\in \mathcal{J}(\mathcal{L})\}=X$,
there is an element $L$ in $\mathcal{J}(\mathcal{L})$ such that $E_-\ngeq L$.
Let $x$ be in $E$ and $f$ be in $(L_-)^\bot$. Let $\overline{\delta}=\delta-\lambda$. Then $\overline{\delta}(I)=\delta(I)$, and it follows from Lemmas \ref{206} and \ref{207} that there exists a linear mapping
$S: (L_-)^\bot\rightarrow X^*$ such that
$$\overline{\delta}(x\otimes f)=x\otimes Sf.$$
This together with $$\frac{m+l}{m+n+2l}\overline{\delta}(I)x\otimes f+\frac{n+l}{m+n+2l}x\otimes f\overline{\delta}(I)=\overline{\delta}(x\otimes f)$$ leads to
$$x\otimes (Sf-\frac{n+l}{m+n+2l}\overline{\delta}(I)^*f)=\frac{m+l}{m+n+2l}\overline{\delta}(I)x\otimes f.$$
Thus there exists a constant $\lambda_E$ in $\mathbb{K}$ such that $\overline{\delta}(I)x=\lambda_Ex$ for every $x\in E$.
Similarly, for every $y\in L$, we have $\overline{\delta}(I)y=\lambda_Ly$.

If $f(x)\neq 0$, then $\overline{\delta}(x\otimes f)=\overline{\delta}(I)x\otimes f=x\otimes f\overline{\delta}(I)$.

If $f(x)=0$, according to the proof of Lemma \ref{207}, we can choose $z$ from $L$ and $g$ from $E_-^\perp$ such that $g(z)=1$ and
$\overline{\delta}(I)x\otimes f=2g(\overline{\delta}(I)z)x\otimes f-x\otimes f\overline{\delta}(I)$.
Since $x\in E\leq L$, we have $\overline{\delta}(I)x=\lambda_Lx$. Thus
$$\overline{\delta}(I)x\otimes f=2\lambda_Lx\otimes f-x\otimes f\overline{\delta}(I)=2\overline{\delta}(I)x\otimes f-x\otimes f\overline{\delta}(I).$$
Hence $\overline{\delta}(x\otimes f)=\overline{\delta}(I)x\otimes f=x\otimes f\overline{\delta}(I)$.\

Then for any $x\in E$, $f\in (L_-)^\bot$ and $A\in \textrm{alg}\mathcal{L}$, we have
\begin{eqnarray*}
A\overline{\delta}(I)x\otimes f=Ax\otimes f\overline{\delta}(I)=\overline{\delta}(I)Ax\otimes f.
\end{eqnarray*}
So we have $A\overline{\delta}(I)x=\overline{\delta}(I)Ax$ for any $x\in E$. By $\vee\{F: F\in \mathcal{J}(\mathcal{L})\}=X$,
we have $\overline{\delta}(A)=A\overline{\delta}(I)=\overline{\delta}(I)A$ for any $A\in \textrm{alg}\mathcal{L}$, this means $\delta(A)=A\delta(I)+\lambda(A)=\delta(I)A+\lambda(A)$. The remaining part goes along the same line as the proof of Corollary \ref{205} and this completes the proof.
\end{proof}

\begin{remark}
\emph{By \cite{SRL}, a subspace lattice $\mathcal{L}$ is said to be \textit{completely~distributive}
if $L=\vee\{E\in \mathcal{L}: E_-\ngeq L\}$ or $L=\wedge\{E_-: E\in \mathcal{L}~\mathrm{and}~E\nleq L\}$ for all $L\in \mathcal{L}$.
It follows that completely distributive subspace lattices satisfy the condition
$\vee \{E: E\in \mathcal{J}(\mathcal{L}) \}=X$. Thus Theorem \ref{208} applies to completely distributive subspace lattice algebras.
A subspace lattice $\mathcal{L}$ is called a \textit{$\mathcal{J}$-subspace lattice} on $X$ if
$\vee\{K: K\in \mathcal{J}(\mathcal{L})\}=X$,
$\wedge\{K_-: K\in \mathcal{J}(\mathcal{L})\}=(0)$,
$K\vee K_-=X$ and
$K\wedge K_-=(0)~\mathrm{for~any}~K\in \mathcal{J}(\mathcal{L})$. Note also that the condition
$\vee \{K: K\in \mathcal{J}(\mathcal{L}) \}=X$ is part of the definition of $\mathcal{J}$-subspace lattices, thus Theorem \ref{208} also
applies to $\mathcal{J}$-subspace lattice algebras.}
\end{remark}

With a proof similar to the proof of Theorem \ref{208}, we have the following theorem.
\begin{theorem}\label{209}
Let $\mathcal{L}$ be a subspace lattice on a Banach space $X$
satisfying $\wedge\{L_-: L\in \mathcal{J}(\mathcal{L})\}=(0)$.
If $\delta$ is a weak $(m,n,l)$-Jordan centralizer from $alg\mathcal{L}$ into itself, then $\delta$ is a centralizer.
In particular, the conclusion holds if $\mathcal{L}$ has the property $(0)_+\neq (0)$.
\end{theorem}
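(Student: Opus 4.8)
The plan is to dualize the proof of Theorem \ref{208}, interchanging the roles of vectors and functionals and replacing the density hypothesis $\vee\{F\}=X$ by the separation consequence of $\wedge\{L_-: L\in\mathcal{J}(\mathcal{L})\}=(0)$. First I would record the dual selection step: for any fixed $L\in\mathcal{J}(\mathcal{L})$ there is an $E\in\mathcal{J}(\mathcal{L})$ with $E_-\ngeq L$ (equivalently $E\leq L$); otherwise $E_-\geq L$ for every $E\in\mathcal{J}(\mathcal{L})$, whence $\wedge\{E_-\}\geq L\neq(0)$, contradicting the hypothesis. Thus Lemma \ref{207} is available with $x\in E$ and $f\in(L_-)^\perp$, and, writing $\overline{\delta}=\delta-\lambda$ so that $\overline{\delta}(I)=\delta(I)$, Lemmas \ref{206} and \ref{207} again produce a linear map $S$ with $\overline{\delta}(x\otimes f)=x\otimes Sf$.

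Next, exactly as in Theorem \ref{208}, I would combine this with (\ref{2002}) to obtain
\[
\tfrac{m+l}{m+n+2l}(\overline{\delta}(I)x)\otimes f=x\otimes\bigl(Sf-\tfrac{n+l}{m+n+2l}\,\overline{\delta}(I)^*f\bigr),
\]
and since $m+l\geq1$ the comparison of these rank-one operators (for a fixed nonzero $f\in(L_-)^\perp$, which exists because $L_-\neq X$) forces $\overline{\delta}(I)x\in\mathbb{K}x$ for every $x\in E$, hence $\overline{\delta}(I)x=\lambda_E x$ for a constant $\lambda_E$. The point of departure from Theorem \ref{208} is that I would now also extract the \emph{functional} eigenvalue. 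Establishing, by the $f(x)\neq0$ and $f(x)=0$ cases copied from Theorem \ref{208}, that $\overline{\delta}(x\otimes f)=(\overline{\delta}(I)x)\otimes f=x\otimes f\,\overline{\delta}(I)$, i.e. $(\overline{\delta}(I)x)\otimes f=x\otimes(\overline{\delta}(I)^*f)$, and substituting $\overline{\delta}(I)x=\lambda_E x$, one reads off $\overline{\delta}(I)^*f=\lambda_E f$ for \emph{every} $f\in(L_-)^\perp$; set $\mu_L=\lambda_E$.

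Finally I would run the separation-of-points argument dual to the spanning argument. For $A\in\mathrm{alg}\mathcal{L}$ the subspace $L_-\in\mathcal{L}$ is $A$-invariant, so $A^*$ leaves $(L_-)^\perp$ invariant; hence for $f\in(L_-)^\perp$,
\[
\overline{\delta}(I)^*A^*f=\mu_L A^*f=A^*(\mu_L f)=A^*\overline{\delta}(I)^*f,
\]
which says $\langle f,(A\overline{\delta}(I)-\overline{\delta}(I)A)v\rangle=0$ for all $v\in X$. As $L$ ranges over $\mathcal{J}(\mathcal{L})$ and $f$ over $(L_-)^\perp$, these functionals annihilate only $\wedge\{L_-\}=(0)$, so they separate the points of $X$ and therefore $A\overline{\delta}(I)=\overline{\delta}(I)A$ for all $A$. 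With $\overline{\delta}(I)=\delta(I)$ central, (\ref{2002}) gives $\overline{\delta}(A)=A\overline{\delta}(I)=\overline{\delta}(I)A$, and the elimination of the scalar terms $\lambda(A)$ then proceeds verbatim as in Corollary \ref{205}. The ``in particular'' clause follows since $(0)_+\neq(0)$ makes $(0)_+$ a member of $\mathcal{J}(\mathcal{L})$ with $((0)_+)_-=(0)$, which already forces $\wedge\{L_-\}=(0)$.

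The main obstacle I anticipate is not the algebra but the bookkeeping of the two-sided eigenvalue data: I must verify that $\mu_L=\lambda_E$ is genuinely constant across the \emph{whole} annihilator $(L_-)^\perp$, so that it can be pulled through $A^*$, and that passing to adjoints is legitimate, namely that $A^*$ really preserves $(L_-)^\perp$ and that the family $\{(L_-)^\perp\}$ separates points. The invariance $A^*(L_-)^\perp\subseteq(L_-)^\perp$ is precisely where the lattice structure ($L_-\in\mathcal{L}$) enters, and the separation is exactly the content of $\wedge\{L_-\}=(0)$; aligning these two dual facts with the eigenvalue relation is the crux of the argument.
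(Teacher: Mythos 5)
Your overall architecture---the dual selection step, passing to the action of $\overline{\delta}(I)^*$ on the annihilators $(L_-)^\perp$, and the closing separation argument using $A^*(L_-)^\perp\subseteq(L_-)^\perp$ together with $\wedge\{L_-\}=(0)$---is exactly the intended dualization of Theorem \ref{208}, and both your selection step and the ``in particular'' clause are argued correctly. The gap is in the middle, where you assert that the identity $\overline{\delta}(x\otimes f)=(\overline{\delta}(I)x)\otimes f=x\otimes f\,\overline{\delta}(I)$ follows from ``the $f(x)\neq0$ and $f(x)=0$ cases copied from Theorem \ref{208}.'' The $f(x)=0$ case cannot be copied. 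In Theorem \ref{208} that case rests on $\overline{\delta}(I)$ acting as the scalar $\lambda_L$ on the \emph{larger} subspace $L$: this is what evaluates $g(\overline{\delta}(I)z)$ for the auxiliary vector $z\in L$ and what turns $2g(\overline{\delta}(I)z)x\otimes f$ into $2\overline{\delta}(I)x\otimes f$. That scalar action on $L$ is obtained there by a \emph{second} application of the hypothesis $\vee\{F: F\in\mathcal{J}(\mathcal{L})\}=X$, which supplies an $L'\in\mathcal{J}(\mathcal{L})$ with $L_-\ngeq L'$ so that Lemmas \ref{206} and \ref{207} apply to the pair $(L,L')$. The dual hypothesis $\wedge\{L_-\}=(0)$ provides no such companion for $L$; it only produces elements $E$ \emph{below} $L$, so you have scalar action of $\overline{\delta}(I)$ on $E$ only, and $g(\overline{\delta}(I)z)$ is uncontrolled. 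The failure is not hypothetical: nothing in $E_-\ngeq L$ excludes $E\leq L_-$, and in that configuration every $f\in(L_-)^\perp$ vanishes identically on $E$, so your $f(x)\neq0$ (idempotent) case never occurs and the proposal establishes neither the displayed identity nor $\overline{\delta}(I)^*f=\lambda_E f$. The constancy of $\mu_L$ across $(L_-)^\perp$, which you yourself flag as the crux, is thus left unproved precisely where it is needed.

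The gap is repairable without disturbing your endgame. In the $f(x)=0$ case, equation (\ref{2004}) in the proof of Lemma \ref{207}, once the sub-cases there show that its scalar and $f(\overline{\delta}(I)x)$ both vanish, reads $\overline{\delta}(I)x\otimes f+x\otimes f\,\overline{\delta}(I)=2g(\overline{\delta}(I)z)\,x\otimes f$; inserting $\overline{\delta}(I)x=\lambda_E x$ gives $x\otimes\bigl(\overline{\delta}(I)^*f-(2g(\overline{\delta}(I)z)-\lambda_E)f\bigr)=0$, hence $\overline{\delta}(I)^*f=\bigl(2g(\overline{\delta}(I)z)-\lambda_E\bigr)f$. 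So every $f\in(L_-)^\perp$ vanishing on $E$ is an eigenvector of $\overline{\delta}(I)^*$, while every $f\in(L_-)^\perp$ with $f(x')\neq0$ for some $x'\in E$ is an eigenvector with eigenvalue $\lambda_E$ by Lemma \ref{204}, as in your other case. Therefore all of $(L_-)^\perp$ consists of eigenvectors of $\overline{\delta}(I)^*$, and since $(L_-)^\perp$ is a linear subspace the eigenvalue must be constant on it (a sum of eigenvectors for distinct eigenvalues is never an eigenvector). This produces the constant $\mu_L$ you need; its value need not equal $\lambda_E$, but your separation argument never uses that identification---it uses only constancy on each $(L_-)^\perp$, the invariance of $(L_-)^\perp$ under $A^*$, and $\cap\{L_-: L\in\mathcal{J}(\mathcal{L})\}=(0)$. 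With this replacement your proof is correct and is, in substance, the ``similar proof'' the paper intends.
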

As for the cases of $(m,n,l)$-Jordan centralizers, we have from Remark \ref{211} , Theorem \ref{208} and Theorem \ref{209} the following theorem.
\begin{theorem}
Let $\mathcal{L}$ be a subspace lattice on a Banach space $X$
satisfying $\vee\{F: F\in \mathcal{J}(\mathcal{L})\}=X$ or $\wedge\{L_-: L\in \mathcal{J}(\mathcal{L})\}=(0)$. If $\delta$ is an $(m,n,l)$-Jordan centralizer from $alg\mathcal{L}$ to $B(X)$, then $\delta$ is a centralizer.
\end{theorem}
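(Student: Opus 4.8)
The plan is to deduce this theorem directly from Theorems \ref{208} and \ref{209} together with Remark \ref{211}, treating the $(m,n,l)$-Jordan centralizer as the special case of a weak $(m,n,l)$-Jordan centralizer in which every scalar $\lambda_A$ vanishes. The only genuine difference from the two weak theorems is that here $\delta$ is allowed to range in the larger algebra $B(X)$ rather than in $\mathrm{alg}\mathcal{L}$ itself; since $B(X)$ is an $\mathrm{alg}\mathcal{L}$-bimodule, the notion of centralizer still makes sense, and Remark \ref{211} is precisely the observation that permits us to run the earlier rank-one arguments in this bimodule-valued setting.

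Concretely, I would first record that an $(m,n,l)$-Jordan centralizer $\delta$ satisfies (\ref{101}) with $\lambda_A=0$ for every $A$, so that $\lambda(A)=0$ identically and hence $\overline{\delta}=\delta-\lambda=\delta$. Next I would invoke Remark \ref{211} to guarantee that Lemmas \ref{201}, \ref{204}, \ref{206} and \ref{207}, being proved by manipulations that remain valid word for word once each $\lambda_A$ is set to zero, hold for $\delta\colon \mathrm{alg}\mathcal{L}\to B(X)$. In particular Lemma \ref{207} still yields $\delta(x\otimes f)X\subseteq \mathbb{K}x$ for the relevant rank-one operators $x\otimes f$, and Lemma \ref{206} then produces the linear map $S\colon (L_-)^\perp\to X^*$ needed to launch the argument of Theorem \ref{208}.

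The case split follows the hypothesis. If $\vee\{F\colon F\in\mathcal{J}(\mathcal{L})\}=X$, I would repeat the proof of Theorem \ref{208} verbatim: with $\lambda\equiv 0$ one obtains $\delta(I)x=\lambda_E x$ on each $E\in\mathcal{J}(\mathcal{L})$, deduces $A\delta(I)x=\delta(I)Ax$ for all such $x$, and, since these $x$ span $X$, concludes $\delta(A)=A\delta(I)=\delta(I)A$ for every $A\in\mathrm{alg}\mathcal{L}$. If instead $\wedge\{L_-\colon L\in\mathcal{J}(\mathcal{L})\}=(0)$, the dual argument of Theorem \ref{209} gives the same conclusion. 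Either way $\delta$ is simultaneously a left and a right centralizer, hence a centralizer; moreover, because $\lambda\equiv 0$, the final normalization step that in Corollary \ref{205} and Theorem \ref{208} was used to annihilate the residual scalar term is here vacuous.

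The main obstacle is not computational but one of bookkeeping: one must check that every place where the earlier proofs appeared to use $\delta$ mapping into $\mathrm{alg}\mathcal{L}$ in fact only used the $\mathrm{alg}\mathcal{L}$-bimodule structure of the target, so that enlarging the codomain to $B(X)$ causes no trouble. Granting Remark \ref{211}, this verification is immediate, and the theorem then follows with essentially no new work.
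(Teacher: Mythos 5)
Your proposal is correct and takes essentially the same route as the paper: the paper's own proof is exactly the observation that Remark \ref{211} lets Theorems \ref{208} and \ref{209} apply to bimodule-valued $(m,n,l)$-Jordan centralizers, since setting every $\lambda_A=0$ makes all the lemmas valid for $\delta\colon \mathrm{alg}\mathcal{L}\to B(X)$. The bookkeeping verification you flag as the main obstacle is precisely the content of Remark \ref{211}, so nothing further is needed.
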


In the rest of this section we will investigate weak $(m,n,l)$-Jordan centralizer on CSL algebras.
Let $H$ be a complex separable Hilbert space and $\mathcal{L}$ be a CSL on $H$.
Let $\mathcal{L^\perp}$ be the lattice \{$I-E: E\in \mathcal{L}$\} and $\mathcal{L^\prime}$ be the commutant of $\mathcal{L}$.
It is easy to verify that $(\textrm{alg}\mathcal{L})^*=\textrm{alg}\mathcal{L^\perp}$ for any lattice $\mathcal{L}$ on $H$ and the diagonal
$(\textrm{alg}\mathcal{L})\cap(\textrm{alg}\mathcal{L})^*=\mathcal{L^\prime}$ is a von
Neumann algebra. Given a CSL $\mathcal{L}$ on a Hilbert space $H$,
we define $G_1(\mathcal{L})$ and $G_2(\mathcal{L})$ to be the
projections onto the closures of the linear spans of $\{EA(I-E)x:
E\in \mathcal{L}, A\in alg\mathcal{L}, x\in H\}$ and $\{(I-E)A^*Ex:
E\in \mathcal{L}, A\in alg\mathcal{L}, x\in H\}$, respectively. For simplicity, we
write $G_1$ and $G_2$ for $G_1(\mathcal{L})$ and $G_2(\mathcal{L})$.
Since CSL is reflexive, it is easy to verify that $G_1\in
\mathcal{L}$ and $G_2\in \mathcal{L}^\bot$. In \cite{JDCSL}, Lu
showed that $G_1\vee G_2\in \mathcal{L}\cap\mathcal{L}^\bot$ and
$\textrm{alg}\mathcal{L}(I-G_1\vee G_2)\subseteq \mathcal{L}^\prime$.

\begin{theorem}\label{210}
Let $\mathcal{L}$ be a CSL on $H$. If $\delta$ is a bounded weak $(m,n,l)$-Jordan centralizer from $alg\mathcal{L}$ into itself,
then $\delta$ is a centralizer.
\end{theorem}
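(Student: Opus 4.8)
The plan is to reduce everything to the single statement that $\delta(I)$ lies in the centre of $\mathrm{alg}\mathcal{L}$, and to obtain this by splitting $\mathrm{alg}\mathcal{L}$ along the central projection $G=G_1\vee G_2$. Note first that once $A\delta(I)=\delta(I)A$ holds for all $A$, Lemma \ref{201} gives $\delta(A)=\delta(I)A+\lambda(A)=A\delta(I)+\lambda(A)$; substituting this back into the defining identity (\ref{101}) and using that $\lambda(A)$ is a scalar forces $\lambda(A)A\in\mathbb{K}I$, whence $\lambda(A)=0$ and $\delta(A)=\delta(I)A=A\delta(I)$, exactly as in the closing arguments of Corollary \ref{205} and Theorem \ref{208}. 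So the entire content is the centrality of $\delta(I)$.

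Since $G=G_1\vee G_2\in\mathcal{L}\cap\mathcal{L}^\perp$, both $G$ and $I-G$ are invariant under $\mathrm{alg}\mathcal{L}$, so $G$ is a central idempotent: $AG=GAG=GA$ for every $A\in\mathrm{alg}\mathcal{L}$. Hence $A=GAG+(I-G)A(I-G)$, and $\mathrm{alg}\mathcal{L}$ is the direct sum of $\mathcal{A}_1=G\,\mathrm{alg}\mathcal{L}\,G$ and $\mathcal{M}=(I-G)\mathrm{alg}\mathcal{L}(I-G)$. Because $\mathcal{L}'\subseteq\mathrm{alg}\mathcal{L}$ while $\mathrm{alg}\mathcal{L}(I-G)\subseteq\mathcal{L}'$, the second summand equals $(I-G)\mathcal{L}'(I-G)$, a von Neumann algebra acting on $(I-G)H$. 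It therefore suffices to show that $\delta(I)$ commutes with each of $\mathcal{A}_1$ and $\mathcal{M}$.

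For the diagonal summand $\mathcal{M}$, every self-adjoint element is a norm limit of linear combinations of its spectral projections, which again lie in the von Neumann algebra $\mathcal{M}$. For each such projection $Q$, Lemma \ref{204}(i) gives $\delta(Q)=Q\delta(I)=\delta(I)Q$, so $\delta(I)$ commutes with $Q$; since commuting with the fixed bounded operator $\delta(I)$ survives norm limits, $\delta(I)$ commutes with all of $\mathcal{M}$. This is the step that consumes the boundedness hypothesis, passing from the projections generating $\mathcal{M}$ to its norm closure. For the summand $\mathcal{A}_1$, the construction of $G_1$ and $G_2$ guarantees that the compression of $\mathcal{L}$ to $GH$ carries an abundance of rank one operators --- concretely, the ranges of the off-diagonal operators $EA(I-E)$ span $G_1H$ and, dually, $G_2$ is controlled through adjoints --- so that the compressed lattice satisfies the spanning hypotheses of Theorems \ref{208} and \ref{209}. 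Running the rank one argument of Lemmas \ref{206} and \ref{207} there (as in the proof of Theorem \ref{208}, with $\overline{\delta}=\delta-\lambda$ so that $\overline{\delta}(I)=\delta(I)$) shows $\overline{\delta}(u\otimes f)=u\otimes Sf$ for the available rank one operators $u\otimes f\in\mathrm{alg}\mathcal{L}$ and, after comparison with the formula of Lemma \ref{201}, that $\delta(I)$ acts as a scalar on a dense subset of $GH$; continuity of the fixed operator $\delta(I)$ then yields $A\delta(I)=\delta(I)A$ on $\mathcal{A}_1$.

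Combining the two summands gives that $\delta(I)$ is central, and the reduction of the first paragraph finishes the proof. The main obstacle is the gluing in the third paragraph: a general CSL need not satisfy $\vee\{F:F\in\mathcal{J}(\mathcal{L})\}=X$, so the rank one machinery of Theorems \ref{208}--\ref{209} reaches only the part supported on $G_1\vee G_2$, and one must treat the complementary diagonal von Neumann core by the entirely different projection/continuity argument and then verify that the two partial descriptions of $\delta(I)$ are compatible across the central decomposition $I=G+(I-G)$.
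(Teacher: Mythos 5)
Your overall architecture --- split along the central projection $G=G_1\vee G_2$, handle the von Neumann corner $(I-G)\,\mathrm{alg}\mathcal{L}\,(I-G)$ through its projections via Lemma \ref{204}(i), and reduce everything to centrality of $\delta(I)$ via Lemma \ref{201} and the closing computation of Corollary \ref{205} --- is essentially the paper's Case 2, and those parts are sound. The fatal problem is your treatment of $\mathcal{A}_1=G\,\mathrm{alg}\mathcal{L}\,G$. You assert that because the ranges of the off-diagonal operators $EA(I-E)$ span $G_1H$, the compressed lattice ``satisfies the spanning hypotheses of Theorems \ref{208} and \ref{209}'', and you then run the rank-one machinery of Lemmas \ref{206}--\ref{207}. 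This is a non sequitur: $G_1$ and $G_2$ are built from off-diagonal operators, which have nothing to do with rank one operators or with $\mathcal{J}(\mathcal{L})$. Concretely, let $\mathcal{L}$ be the CSL on $H=L^2[0,1]\oplus L^2[0,1]$ generated by the projections $M_{\chi_S}\oplus M_{\chi_S}$ ($S\subseteq[0,1]$ measurable) together with the projection onto the first summand. Then $\mathrm{alg}\mathcal{L}$ is the algebra of upper triangular $2\times2$ matrices with entries in $L^\infty[0,1]$ (acting as multiplication operators); here $G_1$ and $G_2$ are exactly the projections onto the first and second summands, so $G_1\vee G_2=I$, yet $\mathrm{alg}\mathcal{L}$ contains no nonzero rank one operator at all (a nonzero multiplication operator on $L^2[0,1]$ has infinite rank), hence $\mathcal{J}(\mathcal{L})=\emptyset$ and both hypotheses $\vee\{F:F\in\mathcal{J}(\mathcal{L})\}=X$ and $\wedge\{L_-:L\in\mathcal{J}(\mathcal{L})\}=(0)$ fail. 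So your third paragraph collapses precisely on the part of the algebra that $G_1\vee G_2$ is supposed to control: there is no supply of rank ones and no ``dense subset of $GH$ on which $\delta(I)$ acts as a scalar''.

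The missing idea is the paper's rank-one-free argument for the case $G_1\vee G_2=I$ (its Case 1): for $P\in\mathcal{L}$ and $T\in\mathrm{alg}\mathcal{L}$, the off-diagonal element $PT(I-P)=P-(P-PT(I-P))$ is a \emph{difference of two idempotents} of $\mathrm{alg}\mathcal{L}$, so Lemma \ref{204} applies to it and yields
\begin{equation*}
\delta(I)APT(I-P)=A\delta(I)PT(I-P)=(\delta(A)-\lambda(A))PT(I-P);
\end{equation*}
taking closed linear spans of the ranges of all such $PT(I-P)$ gives $A\delta(I)G_1=\delta(I)AG_1=(\delta(A)-\lambda(A))G_1$, a dual argument applied to $\delta^*(A^*):=\delta(A)^*$ on $\mathrm{alg}\mathcal{L}^\perp$ gives the corresponding identity on $G_2$ (hence on $I-G_1\leq G_2$), and the decomposition $A=AG_1+G_1A(I-G_1)+(I-G_1)A$ then produces $\delta(A)=A\delta(I)+\lambda(A)=\delta(I)A+\lambda(A)$. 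Two smaller inaccuracies: commutation of the \emph{fixed} operator $\delta(I)$ with a norm limit of linear combinations of projections costs nothing, so your von Neumann step never actually uses continuity of $\delta$; the paper consumes the boundedness hypothesis in its Case 2 because it proves the stronger statement $\delta^{(2)}(A)=\delta^{(2)}(I)A$ by approximating $A$ itself by combinations of projections (via Corollary \ref{205}), whereas in your scheme, once $\delta(I)$ is central, Lemma \ref{201} gives $\delta(A)=\delta(I)A+\lambda(A)$ for every $A$ with no approximation. Also, in the proof of Theorem \ref{208} the operator $\delta(I)$ acts as a (possibly different) scalar on each member of $\mathcal{J}(\mathcal{L})$ separately, not as a single scalar on a dense subset, so even where rank ones exist your phrasing of that step would need repair.
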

\begin{proof}
We divide the proof into two cases.\\
Case 1: Suppose $G_1\vee G_2=I$.\

Let $A\in \textrm{alg}\mathcal{L}$. For any $T\in \textrm{alg}\mathcal{L}$ and $P\in\mathcal{L}$,
since $PT(I-P)=P-(P-PT(I-P))$ is a difference of two idempotents, it follows from Lemma \ref{204} that
\begin{eqnarray*}
\delta(I)APT(I-P)=A\delta(I)PT(I-P)=\delta(APT(I-P))=\delta(A)PT(I-P)-\lambda(A)PT(I-P).
\end{eqnarray*}

By arbitrariness of $P$ and $T$, we have $A\delta(I)G_1=\delta(I)AG_1=(\delta(A)-\lambda(A))G_1$. That is,
\begin{eqnarray*}
\delta(A)G_1=(A\delta(I)+\lambda(A))G_1=(\delta(I)A+\lambda(A))G_1,
\end{eqnarray*}
whence
\begin{eqnarray}
\delta(AG_1)=\delta(A)G_1+\lambda(AG_1)-\lambda(A)G_1=\delta(I)AG_1+\lambda(AG_1)=A\delta(I)G_1+\lambda(AG_1)\label{3001}.
\end{eqnarray}

Define $\delta^*(A^*)=\delta(A)^*$ for every $A^*\in \textrm{alg}\mathcal{L}^\bot$. So
\begin{eqnarray*}
(m+n+l)\delta^*((A^*)^2)&=&((m+n+l)\delta(A^2))^*\\
&=&(m\delta(A)A+nA\delta(A)+lA\delta(I)A+\lambda_AI)^*\\
&=&mA^*\delta^*(A^*)+n\delta^*(A^*)A^*+lA^*\delta^*(I)A^*+\lambda_{A^*},
\end{eqnarray*}
where $\lambda_{A^*}=\overline{\lambda_A}$.

With the proof similar to the proof of (\ref{3001}), we have $$G_2\delta(I)A=G_2A\delta(I)=G_2(\delta(A)-\lambda(A)).$$
So by $G_1\vee G_2=I$,
\begin{eqnarray*}
(I-G_1)\delta(I)A=(I-G_1)A\delta(I)=(I-G_1)(\delta(A)-\lambda(A)),
\end{eqnarray*}
whence
\begin{eqnarray}
\delta((I-G_1)A)&=&(1-G_1)\delta(A)+\lambda((I-G_1)A)-\lambda(A)(I-G_1)\nonumber\\
&=&(1-G_1)(\delta(A)-\lambda(A))+\lambda((I-G_1)A)\nonumber\\
&=&(1-G_1)\delta(I)A+\lambda((I-G_1)A)\nonumber\\
&=&(I-G_1)A\delta(I)+\lambda((I-G_1)A)\label{3004}.
\end{eqnarray}

Hence by (\ref{3001}) and (\ref{3004}),
\begin{eqnarray*}
\delta(A)&=&\delta(AG_1+G_1A(I-G_1)+(I-G_1)A)\\
&=&A\delta(I)G_1+\lambda(AG_1)+G_1A(I-G_1)\delta(I)+(I-G_1)A\delta(I)+\lambda((1-G_1)A)\\
&=&G_1A\delta(I)G_1+G_1A\delta(I)(I-G_1)+(I-G_1)A\delta(I)\\
&&+\lambda(AG_1)+\lambda((1-G_1)A)+\lambda(G_1A(1-G_1))\\
&=&A\delta(I)+\lambda(A),
\end{eqnarray*}
and similarly, $\delta(A)=\delta(I)A+\lambda(A)$. The remaining part goes along the same line as the proof of Corollary \ref{205} and we conclude that $\delta$ is a centralizer in this case.\\
Case 2: Suppose $G_1\vee G_2<I$.

Let $G=G_1\vee G_2$. Since $G\in \mathcal{L}\cap\mathcal{L}^\bot$ and $\textrm{alg}\mathcal{L}(I-G)\subseteq\mathcal{L}^\prime$,
so $(I-G)\textrm{alg}\mathcal{L}(I-G)$ is a von Neumann algebra. The algebra $\textrm{alg}\mathcal{L}$ can be written as the direct sum
$$\textrm{alg}\mathcal{L}=\textrm{alg}(G\mathcal{L}G)\oplus \textrm{alg}((I-G)\mathcal{L}(I-G)).$$
By Lemma \ref{204} we have that $$\delta(GAG)=G\delta(A)G~\mathrm{and}~\delta((I-G)A(I-G))=(I-G)\delta(A)(I-G)$$
for every $A\in \textrm{alg}\mathcal{L}$. Therefore $\delta$ can be written as $\delta^{(1)}\oplus\delta^{(2)}$,
where $\delta^{(1)}$ is a weak (\textit{m,n,l})-Jordan centralizer from $\textrm{alg}(G\mathcal{L}G)$ into itself
and $\delta^{(2)}$ is a weak (\textit{m,n,l})-Jordan centralizer from $\textrm{alg}((I-G)\mathcal{L}(I-G))$ into itself.
It is easy to show that $G_1(G\mathcal{L}G)\vee G_2(G\mathcal{L}G)=G$.
So it follows from Case 1 that $\delta^{(1)}$ is a centralizer on $\textrm{alg}(G\mathcal{L}G)$.
$(I-G)\textrm{alg}\mathcal{L}(I-G)$ is a von Neumann algebra and $\delta^{(2)}$ is continuous, so by Corollary \ref{205},
$\delta^{(2)}$ is a centralizer on $\textrm{alg}((I-G)\mathcal{L}(I-G))$. Consequently, $\delta$ is a centralizer on $\textrm{alg}\mathcal{L}$.
\end{proof}

\section{Centralizers of generalized matrix algebras}

\

We call $\mathcal M$ a \textsl{unital $\mathcal A$-bimodule} if $\mathcal M$ is an
$\mathcal A$-bimodule and satisfies $I_{\mathcal A}M=M I_{\mathcal A}=M$ for every $M\in{\mathcal M}$.
We call $\mathcal{M}$ a \textit{faithful left $\mathcal{A}$-module} if for any $A\in \mathcal{A}$,
$A\mathcal{M}=0$ implies $A=0$. Similarly, we can define a \textit{faithful right $\mathcal{B}$-module}. \

Throughout this section, we denote the generalized matrix algebra originated from the Morita context
$(\mathcal{A}, \mathcal{B}, \mathcal{M}, \mathcal{N}, \phi_{\mathcal{M}\mathcal{N}}, \varphi_{\mathcal{N}\mathcal{M}})$
by
$\mathcal{U}=\left[\begin{array}{cc}\mathcal{A} & \mathcal{M} \\ \mathcal{N} & \mathcal{B} \\ \end{array} \right]$
,
where $\mathcal{A}, \mathcal{B}$ are two unital algebras over a number field $\mathbb{K}$ and $\mathcal{M}, \mathcal{N}$ are two unital bimodules, and
at least one of $\mathcal{M}$ and $\mathcal{N}$ is distinct from zero.
We use the symbols $I_{\mathcal A}$ and $I_\mathcal{B}$ to denote the unit element in $\mathcal A$ and $\mathcal{B}$, respectively.
Moreover, we make no difference between $\lambda(A)=\frac{1}{m+n+2l}(\lambda_{A+I}-\lambda_A)I$ and $\frac{1}{m+n+2l}(\lambda_{A+I}-\lambda_A)\in\mathbb{K}$.
\begin{lemma}\label{30101}
Let $\delta$ be a weak $(m,n,l)$-Jordan centralizer from $\mathcal{U}$ into itself. Then $\delta$ is of the form
$$\delta\left(\left[\begin{array}{cc}  A   & M \\  N  & B  \\ \end{array}\right]\right)=
\left[\begin{array}{cc}  a_{11}(A)+\lambda\left(\left[
                                            \begin{array}{cc}
                                              0 & M \\
                                              N & B \\
                                            \end{array}
                                          \right]\right)I_\mathcal{A}
 &c_{12}(M) \\ d_{21}(N) &  b_{22}(B)+\lambda\left(\left[
                                               \begin{array}{cc}
                                                 A & M \\
                                                 N & 0 \\
                                               \end{array}
                                             \right]\right)I_\mathcal{B}
   \\   \end{array}\right],$$
for any $A\in \mathcal{A}$, $M\in \mathcal{M}$, $N\in \mathcal{N}$, $B\in \mathcal{B}$,
where $a_{11}: \mathcal{A}\rightarrow \mathcal{A}$, $c_{12}: \mathcal{M}\rightarrow \mathcal{M}$,
$d_{21}: \mathcal{N}\rightarrow \mathcal{N}$, $b_{22}: \mathcal{B}\rightarrow \mathcal{B}$
are all linear mappings satisfying
$$c_{12}(M)=a_{11}(I_\mathcal{A})M=Mb_{22}(I_\mathcal{B})~~~and~~~d_{21}(N)=Na_{11}(I_\mathcal{A})=b_{22}(I_\mathcal{B})N.$$
\end{lemma}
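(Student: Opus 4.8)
The plan is to apply the structural identity (\ref{2002}) from Lemma \ref{201}, which expresses $\delta(U)$ for any $U\in\mathcal{U}$ as $\frac{m+l}{m+n+2l}\delta(I)U+\frac{n+l}{m+n+2l}U\delta(I)+\lambda(U)$, together with the idempotent identities of Lemma \ref{204} applied to the two diagonal idempotents $P_1=\left[\begin{smallmatrix}I_\mathcal{A}&0\\0&0\end{smallmatrix}\right]$ and $P_2=\left[\begin{smallmatrix}0&0\\0&I_\mathcal{B}\end{smallmatrix}\right]$. First I would compute $\delta(I)$ explicitly: by Lemma \ref{204}(i), $\delta(P_i)=P_i\delta(I)=\delta(I)P_i$, so $\delta(I)$ commutes with both $P_1$ and $P_2$, which forces $\delta(I)$ to be diagonal, say $\delta(I)=\left[\begin{smallmatrix}a_{11}(I_\mathcal{A})&0\\0&b_{22}(I_\mathcal{B})\end{smallmatrix}\right]$ with $a_{11}(I_\mathcal{A})\in\mathcal{A}$ and $b_{22}(I_\mathcal{B})\in\mathcal{B}$.

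Next I would substitute this diagonal form of $\delta(I)$ back into (\ref{2002}) and read off the four matrix entries of $\delta(U)$ for a general $U=\left[\begin{smallmatrix}A&M\\N&B\end{smallmatrix}\right]$. Writing $\alpha=\frac{m+l}{m+n+2l}$ and $\beta=\frac{n+l}{m+n+2l}$ (so $\alpha+\beta=1$), the $(1,1)$-entry becomes $\alpha\, a_{11}(I_\mathcal{A})A+\beta A\,a_{11}(I_\mathcal{A})$ plus the scalar contribution from $\lambda(U)$, which I define to be $a_{11}(A)$ plus the appropriate scalar; similarly the $(2,2)$-entry defines $b_{22}(B)$. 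The crucial point is that the off-diagonal entries contain no $\lambda$ contribution once the scalar parts are absorbed, because $\lambda(U)I$ is a scalar multiple of the identity $\left[\begin{smallmatrix}I_\mathcal{A}&0\\0&I_\mathcal{B}\end{smallmatrix}\right]$ and hence purely diagonal. The $(1,2)$-entry is then $\alpha\,a_{11}(I_\mathcal{A})M+\beta M\,b_{22}(I_\mathcal{B})$ and the $(2,1)$-entry is $\alpha\,d$-type expression; I would denote these by $c_{12}(M)$ and $d_{21}(N)$ respectively, noting they depend only on $M$ (resp. $N$) and are linear.

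To pin down the scalar labels attached to $\lambda$ in the statement, I would carefully track which scalar multiple of which diagonal block survives. The displayed form assigns $\lambda\!\left(\left[\begin{smallmatrix}0&M\\N&B\end{smallmatrix}\right]\right)I_\mathcal{A}$ to the $(1,1)$-block and $\lambda\!\left(\left[\begin{smallmatrix}A&M\\N&0\end{smallmatrix}\right]\right)I_\mathcal{B}$ to the $(2,2)$-block; these come from applying Lemma \ref{204}(ii)--(iii) to the products $U\cdot P_i$ and $P_i\cdot U$ to strip off the diagonal idempotent and isolate the scalar terms, using that $\lambda(P_i)=0$ by Lemma \ref{204}(i). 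Finally, to obtain the two displayed constraints $c_{12}(M)=a_{11}(I_\mathcal{A})M=Mb_{22}(I_\mathcal{B})$ and $d_{21}(N)=Na_{11}(I_\mathcal{A})=b_{22}(I_\mathcal{B})N$, I would apply the weak Jordan centralizer identity (\ref{101}) to a purely off-diagonal element such as $\left[\begin{smallmatrix}0&M\\0&0\end{smallmatrix}\right]$, whose square is zero, and extract the vanishing of the $\lambda$-term there; this equates $\alpha\,a_{11}(I_\mathcal{A})M$ with $\beta M\,b_{22}(I_\mathcal{B})$ and, combined with $\alpha+\beta=1$, forces $a_{11}(I_\mathcal{A})M=Mb_{22}(I_\mathcal{B})$.

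The main obstacle I anticipate is the bookkeeping of the scalar $\lambda$-terms: since $\lambda(U)I$ is a single scalar times the full identity, it contributes to \emph{both} diagonal blocks simultaneously, so disentangling which scalar ($\lambda$ of which truncated matrix) belongs with $I_\mathcal{A}$ versus $I_\mathcal{B}$ requires handling the diagonal and off-diagonal idempotent identities of Lemma \ref{204} in the right combinations rather than a single substitution. Verifying that the off-diagonal maps $c_{12},d_{21}$ genuinely land back in $\mathcal{M},\mathcal{N}$ and carry no leftover scalar correction is where the argument must be checked most carefully.
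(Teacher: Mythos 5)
Your first two steps are sound and would actually streamline the paper's bookkeeping: Lemma \ref{204}(i) applied to the diagonal idempotents $P_1,P_2$ does force $\delta(I)$ to be block-diagonal, and substituting into (\ref{2002}) correctly produces the block form of $\delta$, with off-diagonal entry $c_{12}(M)=\frac{m+l}{m+n+2l}a_{11}(I_\mathcal{A})M+\frac{n+l}{m+n+2l}Mb_{22}(I_\mathcal{B})$ and similarly for $d_{21}(N)$. The scalar bookkeeping you worry about is also harmless, because $\lambda(\cdot)=\delta(\cdot)-\frac{m+l}{m+n+2l}\delta(I)(\cdot)-\frac{n+l}{m+n+2l}(\cdot)\delta(I)$ is itself a linear map, so it distributes across the decomposition of a general element into its four corners.

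The genuine gap is your final step: the constraint $a_{11}(I_\mathcal{A})M=Mb_{22}(I_\mathcal{B})$ cannot be extracted from (\ref{101}) applied to the square-zero element $U_M=\left[\begin{smallmatrix}0&M\\0&0\end{smallmatrix}\right]$. Since $\delta(U_M)$ has diagonal entries $\lambda(U_M)I_\mathcal{A},\lambda(U_M)I_\mathcal{B}$ and $(1,2)$-entry $c_{12}(M)$, while $U_M$ has only its $(1,2)$-entry nonzero, every product in which $c_{12}(M)$, $a_{11}(I_\mathcal{A})M$ or $Mb_{22}(I_\mathcal{B})$ occurs gets multiplied by a zero entry of $U_M$: one finds $\delta(U_M)U_M=U_M\delta(U_M)=\lambda(U_M)U_M$ and $U_M\delta(I)U_M=0$. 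Hence (\ref{101}) collapses to $(m+n)\lambda(U_M)U_M+\lambda_{U_M}I=0$, which yields only $\lambda_{U_M}=0$ and $(m+n)\lambda(U_M)M=0$, and carries no relation whatsoever between $a_{11}(I_\mathcal{A})M$ and $Mb_{22}(I_\mathcal{B})$. (Even if you had reached $\alpha\,a_{11}(I_\mathcal{A})M=\beta\,Mb_{22}(I_\mathcal{B})$, the inference from $\alpha+\beta=1$ to equality of the two sides would be invalid whenever $\alpha\neq\beta$.) The missing idea is polarization rather than squaring: apply (\ref{2001}) to the pair $(P_1,U_M)$, where $P_1U_M=U_M$, $U_MP_1=0$ and $\delta(P_1)=\delta(I)P_1$; this gives $(m+l)c_{12}(M)=(m+l)a_{11}(I_\mathcal{A})M$, and the pair $(U_M,P_2)$ gives $(n+l)c_{12}(M)=(n+l)Mb_{22}(I_\mathcal{B})$. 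This is precisely where the standing hypotheses $m+l\geq1$ and $n+l\geq1$ are used, and it is what the paper does with its elements $S=\left[\begin{smallmatrix}A&M_1\\0&0\end{smallmatrix}\right]$, $T=\left[\begin{smallmatrix}0&M_2\\0&B\end{smallmatrix}\right]$ before specializing $A=I_\mathcal{A}$, $B=I_\mathcal{B}$. The fact that your argument never invokes $m+l\geq1$, $n+l\geq1$ is a telltale sign that the key constraint is not actually being reached.
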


\begin{proof}
Assume that $\delta$ is a weak $(m,n,l)$-Jordan centralizer from $\mathcal{U}$ into itself.
Because $\delta$ is linear, for any $A\in \mathcal{A}$, $M\in \mathcal{M}$, $N\in \mathcal{N}$, $B\in \mathcal{B}$,
we can write
 \begin{eqnarray*}
&&\delta\left(\left[\begin{array}{cc}A & M \\N & B \\\end{array}\right]\right)\\
&=&\left[\begin{array}{cc}a_{11}(A)+b_{11}(B)+c_{11}(M)+d_{11}(N) & a_{12}(A)+b_{12}(B)+c_{12}(M)+d_{12}(N) \\a_{21}(A)+b_{21}(B)+c_{21}(M)+d_{21}(N) & a_{22}(A)+b_{22}(B)+c_{22}(M)+d_{22}(N) \\\end{array}\right],
\end{eqnarray*}
where
$a_{ij}$, $b_{ij}$, $c_{ij}$, $d_{ij}$ are linear mappings, $i, j\in\{1, 2\}$.

Let $P=\left[\begin{array}{cc}I_\mathcal{A} & 0 \\0 & 0 \\\end{array}\right]$ and
for any $A\in \mathcal{A}$, $S=\left[\begin{array}{cc} A & 0 \\  0 & 0  \end{array}\right]$.
By Lemma \ref{204}, $\delta(PS)=P\delta(S)+\lambda(PS)-\lambda(S)P$ and $\delta(SP)=\delta(S)P+\lambda(SP)-\lambda(S)P$, so we have
\begin{eqnarray*}
&&\left[\begin{array}{cc}a_{11}(A) & a_{12}(A) \\a_{21}(A) & a_{22}(A)\\\end{array}\right]
=\delta\left(\left[\begin{array}{cc} A & 0 \\  0 & 0  \end{array}\right]\right)
=\delta\left(\left[\begin{array}{cc}I_\mathcal{A} & 0 \\0 & 0 \\\end{array}\right]
\left[\begin{array}{cc} A & 0 \\  0 & 0  \end{array}\right]\right)\\
&=&\left[\begin{array}{cc}I_\mathcal{A} & 0 \\0 & 0 \\\end{array}\right]
\delta\left(\left[\begin{array}{cc} A & 0 \\  0 & 0  \end{array}\right]\right)+\left[
                                                                                 \begin{array}{cc}
                                                                                   \lambda(PS)I_\mathcal{A} & 0 \\
                                                                                   0 & \lambda(PS)I_\mathcal{B} \\
                                                                                 \end{array}
                                                                               \right]-\left[
                                                                                         \begin{array}{cc}
                                                                                           \lambda(S)I_\mathcal{A}& 0 \\
                                                                                           0 & 0 \\
                                                                                         \end{array}
                                                                                       \right]\\
&=&\left[\begin{array}{cc}a_{11}(A) & a_{12}(A) \\0 & \lambda\left(\left[
                                                                \begin{array}{cc}
                                                                  A & 0 \\
                                                                  0 & 0 \\
                                                                \end{array}
                                                              \right]\right)I_\mathcal{B}
\\\end{array}\right]
\end{eqnarray*}
and
\begin{eqnarray*}
&&\left[\begin{array}{cc}a_{11}(A) & a_{12}(A) \\a_{21}(A) & a_{22}(A)\\\end{array}\right]
=\delta\left(\left[\begin{array}{cc} A & 0 \\  0 & 0  \end{array}\right]\right)
=\delta\left(\left[\begin{array}{cc} A & 0 \\  0 & 0  \end{array}\right]
\left[\begin{array}{cc}I_\mathcal{A} & 0 \\0 & 0 \\\end{array}\right]\right)\\
&=&\delta\left(\left[\begin{array}{cc} A & 0 \\  0 & 0  \end{array}\right]\right)
\left[\begin{array}{cc}I_\mathcal{A} & 0 \\0 & 0 \\\end{array}\right]+\left[
                                                                                 \begin{array}{cc}
                                                                                   \lambda(SP)I_\mathcal{A} & 0 \\
                                                                                   0 & \lambda(SP)I_\mathcal{B} \\
                                                                                 \end{array}
                                                                               \right]-\left[
                                                                                         \begin{array}{cc}
                                                                                           \lambda(S)I_\mathcal{A}& 0 \\
                                                                                           0 & 0 \\
                                                                                         \end{array}
                                                                                       \right]\\
&=&\left[\begin{array}{cc}a_{11}(A) &  0\\a_{21}(A) & \lambda\left(\left[
                                                                \begin{array}{cc}
                                                                  A & 0 \\
                                                                  0 & 0 \\
                                                                \end{array}
                                                              \right]\right)I_\mathcal{B}\\\end{array}\right].
\end{eqnarray*}
So we have $a_{12}(A)=0$, $a_{21}(A)=0$ and $a_{22}(A)=\lambda\left(\left[
                                                                \begin{array}{cc}
                                                                  A & 0 \\
                                                                  0 & 0 \\
                                                                \end{array}
                                                              \right]\right)I_\mathcal{B}$.
\

Similarly, by considering $S=\left[\begin{array}{cc}0 & M \\0 & 0 \\\end{array}\right]$ and
$P=\left[\begin{array}{cc}I_\mathcal{A} & 0 \\0 & 0 \\\end{array}\right]$, we obtain that
$c_{11}(M)=\lambda\left(\left[
                                                                \begin{array}{cc}
                                                                  0 & M \\
                                                                  0 & 0 \\
                                                                \end{array}
                                                              \right]\right)I_\mathcal{A}$, $c_{21}(M)=0$ and $c_{22}(M)=\lambda\left(\left[
                                                                \begin{array}{cc}
                                                                  0 & M \\
                                                                  0 & 0 \\
                                                                \end{array}
                                                              \right]\right)I_\mathcal{B}$ for every $M\in \mathcal{M}$.\

By considering $S=\left[\begin{array}{cc}0 & 0 \\N & 0 \\\end{array}\right]$ and
$P=\left[\begin{array}{cc}I_\mathcal{A} & 0 \\0 & 0 \\\end{array}\right]$, we obtain
$d_{11}(N)=\lambda\left(\left[
                                                                \begin{array}{cc}
                                                                  0 & 0 \\
                                                                  N & 0 \\
                                                                \end{array}
                                                              \right]\right)I_\mathcal{A}$, $d_{12}(N)=0$ and $d_{22}(N)=\lambda\left(\left[
                                                                \begin{array}{cc}
                                                                  0 & 0 \\
                                                                  N & 0 \\
                                                                \end{array}
                                                              \right]\right)I_\mathcal{B}$ for every $N\in \mathcal{N}$.\

By considering $S=\left[\begin{array}{cc}0 & 0 \\0 & B \\\end{array}\right]$ and
$Q=\left[\begin{array}{cc}0 & 0 \\0 & I_\mathcal{B} \\\end{array}\right]$, we obtain
$b_{11}(B)=\lambda\left(\left[
                                                                \begin{array}{cc}
                                                                  0 & 0 \\
                                                                  0 & B \\
                                                                \end{array}
                                                              \right]\right)I_\mathcal{A}$, $b_{12}(B)=0$ and $b_{21}(B)=0$ for every $B\in \mathcal{B}$.\

For any $A\in \mathcal{A}$, $M_1\in \mathcal{M}$, $M_2\in \mathcal{M}$ and $B\in \mathcal{B}$,
let $S=\left[\begin{array}{cc}A & M_1 \\0 & 0 \\\end{array}\right]$ and $T=\left[\begin{array}{cc}0 & M_2 \\0 & B \\\end{array}\right]$.

Then
\begin{eqnarray*}
&&(m+n+l)\left[\begin{array}{cc}0 & c_{12}(AM_2+M_1B) \\0 & 0 \\\end{array}\right]\\
&=&(m+n+l)\delta(ST)=(m+n+l)\delta(ST+TS)\\
&=&m\left[\begin{array}{cc}a_{11}(A)+\lambda\left(\left[
                                                                \begin{array}{cc}
                                                                  0 & M_1 \\
                                                                  0 & 0 \\
                                                                \end{array}
                                                              \right]\right)I_\mathcal{A} & c_{12}(M_1) \\0 & \lambda\left(\left[
                                                                \begin{array}{cc}
                                                                  A & M_1 \\
                                                                  0 & 0 \\
                                                                \end{array}
                                                              \right]\right)I_\mathcal{B} \\\end{array}\right] \left[\begin{array}{cc}0 & M_2 \\0 & B \\\end{array}\right]\\
&&+ m\left[\begin{array}{cc}\lambda\left(\left[
                                                                \begin{array}{cc}
                                                                  0 & M_2 \\
                                                                  0 & B \\
                                                                \end{array}
                                                              \right]\right)I_\mathcal{A} & c_{12}(M_2) \\0 & b_{22}(B)+\lambda\left(\left[
                                                                \begin{array}{cc}
                                                                  0 & M_2 \\
                                                                  0 & 0 \\
                                                                \end{array}
                                                              \right]\right)I_\mathcal{B} \\\end{array}\right] \left[\begin{array}{cc}A & M_1 \\0 & 0 \\\end{array}\right]\\
&&+n\left[\begin{array}{cc}A & M_1 \\0 & 0 \\\end{array}\right]
\left[\begin{array}{cc}\lambda\left(\left[
                                                                \begin{array}{cc}
                                                                  0 & M_2 \\
                                                                  0 & B \\
                                                                \end{array}
                                                              \right]\right)I_\mathcal{A} & c_{12}(M_2) \\0 & \lambda\left(\left[
                                                                \begin{array}{cc}
                                                                  0 & M_2 \\
                                                                  0 & 0 \\
                                                                \end{array}
                                                              \right]\right)I_\mathcal{B}+b_{22}(B) \\\end{array}\right]
\end{eqnarray*}
\begin{eqnarray*}
&&+n\left[\begin{array}{cc}0 & M_2 \\0 & B \\\end{array}\right]
\left[\begin{array}{cc}a_{11}(A)+\lambda\left(\left[
                                                                \begin{array}{cc}
                                                                  0 & M_1 \\
                                                                  0 & 0 \\
                                                                \end{array}
                                                              \right]\right)I_\mathcal{A} & c_{12}(M_1) \\0 & \lambda\left(\left[
                                                                \begin{array}{cc}
                                                                  A & M_1 \\
                                                                  0 & 0 \\
                                                                \end{array}
                                                             \right]\right)I_\mathcal{B}\\\end{array}\right]\\
&&+ l\left[\begin{array}{cc}A & M_1 \\0 & 0 \\\end{array}\right] \left[\begin{array}{cc}a_{11}(I_\mathcal{A}) & 0 \\0 & b_{22}(I_\mathcal{B}) \\\end{array}\right]
\left[\begin{array}{cc}0 & M_2 \\0 & B \\\end{array}\right]\\
&&+ \left[\begin{array}{cc}(\lambda_{S+T}-\lambda_S-\lambda_T) I_\mathcal{A} & 0 \\0 & (\lambda_{S+T}-\lambda_S-\lambda_T) I_\mathcal{B} \\\end{array}\right].
\end{eqnarray*}
The above matrix equation implies
\begin{eqnarray}
&&(m+n+l)c_{12}(AM_2+M_1B)\nonumber\\
&=&ma_{11}(A)M_2+m\lambda\left(\left[
                                       \begin{array}{cc}
                                         0 & M_1 \\
                                         0 & 0 \\
                                       \end{array}
                                     \right]\right)M_2
+ mc_{12}(M_1)B+m\lambda\left(\left[
                   \begin{array}{cc}
                     0 & M_2 \\
                     0 & B \\
                   \end{array}
                 \right]\right)M_1\nonumber\\
&&+nAc_{12}(M_2)+n\lambda\left(\left[
                        \begin{array}{cc}
                          0 & M_2 \\
                          0 & 0 \\
                        \end{array}
                      \right]\right)M_1
+nM_1b_{22}(B)+n\lambda\left(\left[
                         \begin{array}{cc}
                           A & M_1 \\
                           0 & 0 \\
                         \end{array}
                       \right]\right)M_2\nonumber\\
&&+lAa_{11}(I_\mathcal{A})M_2+lM_1b_{22}(I_\mathcal{B})B.\label{3000001}
\end{eqnarray}

Taking $B=0$ , $A=I_\mathcal{A}$ and $M_1=0$ in (\ref{3000001}),
we have $c_{12}(M)=a_{11}(I_\mathcal{A})M$ for every $M\in \mathcal{M}$.
Taking $A=0$, $B=I_\mathcal{B}$ and $M_2=0$ in (\ref{3000001}), we have $c_{12}(M)=Mb_{22}(I_\mathcal{B})$ for every $M\in \mathcal{M}$.

Symmetrically, $d_{21}(N)=b_{22}(I_\mathcal{B})N=Na_{11}(I_\mathcal{A})$ for every $N\in \mathcal{N}$.
\end{proof}

\begin{theorem}\label{30102}
Let $\delta$ be a weak $(m,n,l)$-Jordan centralizer from $\mathcal{U}$ into itself.
If one of the following conditions holds:\\
\emph{(1)}~~${\mathcal M}$ is a faithful left $\mathcal{A}$-module and a faithful right ${\mathcal B}$-module;\\
\emph{(2)}~~${\mathcal M}$ is a faithful left $\mathcal{A}$-module and ${\mathcal N}$ is a faithful left ${\mathcal B}$-module;\\
\emph{(3)}~~${\mathcal N}$ is a faithful right ${\mathcal A}$-module and a faithful left ${\mathcal B}$-module;\\
\emph{(4)}~~${\mathcal N}$ is a faithful right ${\mathcal A}$-module and ${\mathcal M}$ is a faithful right ${\mathcal B}$-module.\\
Then $\delta$ is a centralizer.
\end{theorem}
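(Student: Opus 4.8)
The plan is to use the structural form of $\delta$ provided by Lemma \ref{30101} and to identify its four linear pieces. Write $a:=a_{11}(I_\mathcal A)$ and $b:=b_{22}(I_\mathcal B)$, so that Lemma \ref{30101} gives $c_{12}(M)=aM=Mb$ and $d_{21}(N)=Na=bN$ for all $M\in\mathcal M$ and $N\in\mathcal N$. Recall that a centralizer of $\mathcal U$ is exactly a map of the form $X\mapsto TX=XT$ with $T=\delta(I)$ central in $\mathcal U$; so it suffices to prove that $\delta(I)$ is central and that $\delta(X)=\delta(I)X$ for all $X$. By (\ref{2002}) the second assertion will follow once $\delta(I)$ is central and the scalar defects $\lambda(X)$ are all shown to vanish.

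First I would establish that $a$ is central in $\mathcal A$ and $b$ is central in $\mathcal B$. Using the bimodule axioms together with $aM=Mb$ and $Na=bN$, for every $A\in\mathcal A$ one has $(aA-Aa)M=a(AM)-A(aM)=(AM)b-A(Mb)=0$ for all $M$, and $N(aA-Aa)=(Na)A-(NA)a=b(NA)-b(NA)=0$ for all $N$. Hence $a$ commutes with every $A$ provided $\mathcal M$ is a faithful left $\mathcal A$-module or $\mathcal N$ is a faithful right $\mathcal A$-module. Symmetrically, $M(bB-Bb)=(Mb)B-(MB)b=(aM)B-a(MB)=0$ and $(bB-Bb)N=b(BN)-B(bN)=b(BN)-(BN)a=0$, so $b$ commutes with every $B$ provided $\mathcal M$ is a faithful right $\mathcal B$-module or $\mathcal N$ is a faithful left $\mathcal B$-module. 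Each of the hypotheses (1)--(4) furnishes exactly one condition from each of these two lists, so in all four cases $a$ is central in $\mathcal A$ and $b$ is central in $\mathcal B$.

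Next I would show $\delta(I)$ is central and remove the $\lambda$-terms. Applying Lemma \ref{204}(i) to the idempotent $P=\left[\begin{smallmatrix} I_\mathcal A & 0\\ 0 & 0\end{smallmatrix}\right]$ and comparing the diagonal entries of $\delta(P)=P\delta(I)$ forces the two scalar terms occurring in $\delta(I)$ to vanish, so that $\delta(I)=\left[\begin{smallmatrix} a & 0\\ 0 & b\end{smallmatrix}\right]$; this is central in $\mathcal U$ because $a,b$ are central and $aM=Mb$, $Na=bN$. Then (\ref{2002}) collapses to $\delta(X)=\delta(I)X+\lambda(X)I=X\delta(I)+\lambda(X)I$. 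Substituting this into the defining relation (\ref{101}) and using $X\delta(I)X=\delta(I)X^2$ (valid since $\delta(I)$ is central), every $\delta(I)X^2$-term cancels and one is left with $(m+n)\lambda(X)X\in\mathbb K I$. As $\lambda(X)$ is a scalar, a nonzero value would force $X\in\mathbb K I$; thus $\lambda(X)=0$ for all $X\notin\mathbb K I$, and a direct computation gives $\lambda(cI)=0$ too, exactly as in the proof of Corollary \ref{205}. Therefore $\delta(X)=\delta(I)X=X\delta(I)$ with $\delta(I)$ central, i.e. $\delta$ is a centralizer.

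The main obstacle is the centrality step: the point of the theorem is that the four faithfulness hypotheses are arranged so that, separately for $a$ and for $b$, at least one of $\mathcal M,\mathcal N$ is faithful on the side needed to reduce the relation $aM=Mb$ (respectively $Na=bN$) to a commutator that is annihilated by a faithful module and hence is zero. Once $\delta(I)$ is known to be central, the elimination of the scalar defects $\lambda(X)$ is routine, depending only on the fact that $\lambda(X)X$ is a scalar multiple of the identity.
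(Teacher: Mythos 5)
Your proposal is correct and takes essentially the same route as the paper: you rely on Lemma \ref{30101}, turn the faithfulness hypotheses into centrality of $a_{11}(I_{\mathcal A})$ and $b_{22}(I_{\mathcal B})$ by the very same commutator computations (e.g.\ $(aA-Aa)M=(AM)b-A(Mb)=0$), and finish by eliminating the scalar defects $\lambda(X)$ exactly as in Corollary \ref{205}, which is also how the paper concludes. The only differences are organizational refinements rather than a new idea: you handle all four hypotheses uniformly (the paper proves case (1) and declares the rest analogous), you make explicit that $\delta(I)=\left[\begin{smallmatrix} a & 0\\ 0 & b\end{smallmatrix}\right]$ via Lemma \ref{204}(i) (the paper uses this implicitly), and you deduce $\delta(X)=\delta(I)X+\lambda(X)I$ from centrality of $\delta(I)$ and (\ref{2002}) instead of the paper's entrywise matrix comparison.
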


\begin{proof}
Let $\delta$ be an $(m,n,l)$-Jordan centralizer from $\mathcal{U}$ into itself. By Lemma \ref{30101}, we have
\begin{eqnarray}
c_{12}(M)=a_{11}(I_\mathcal{A})M=Mb_{22}(I_\mathcal{B})\label{30102}
\end{eqnarray}
for every $M\in \mathcal{M}$,
\begin{eqnarray}
d_{21}(N)=Na_{11}(I_\mathcal{A})=b_{22}(I_\mathcal{B})N\label{30103}
\end{eqnarray}
for every $N\in \mathcal{N}$.
\

We assume that (1) holds. The proofs for the other cases are analogous.\

For any $A\in \mathcal{A}$ and $M\in \mathcal{M}$,
$a_{11}(I_\mathcal{A})AM=AMb_{22}(I_\mathcal{B})=Aa_{11}(I_\mathcal{A})M$.
Since ${\mathcal M}$ is a faithful left $\mathcal{A}$-module, we have
\begin{eqnarray*}
a_{11}(I_\mathcal{A})A=Aa_{11}(I_\mathcal{A})
\end{eqnarray*}
whence
\begin{eqnarray}
a_{11}(A)=Aa_{11}(I_\mathcal{A})+\lambda\left(\left[
                                     \begin{array}{cc}
                                       A & 0 \\
                                       0 & 0 \\
                                     \end{array}
                                   \right]\right)I_\mathcal{A}=a_{11}(I_\mathcal{A})A+\lambda\left(\left[
                                     \begin{array}{cc}
                                       A & 0 \\
                                       0 & 0 \\
                                     \end{array}
                                   \right]\right)I_\mathcal{A}.\label{30104}
\end{eqnarray}
For any $B\in \mathcal{B}$ and $M\in \mathcal{M}$,
$MBb_{22}(I_\mathcal{B})=a_{11}(I_\mathcal{A})MB=Mb_{22}(I_\mathcal{B})B$.
Since ${\mathcal M}$ is a faithful right $\mathcal{B}$-module, we have
\begin{eqnarray}
b_{22}(B)=b_{22}(I_\mathcal{B})B+\lambda\left(\left[
                                     \begin{array}{cc}
                                       0 & 0 \\
                                       0 & B \\
                                     \end{array}
                                   \right]\right)I_\mathcal{B}=Bb_{22}(I_\mathcal{B})+\lambda\left(\left[
                                     \begin{array}{cc}
                                       0 & 0 \\
                                       0 & B \\
                                     \end{array}
                                   \right]\right)I_\mathcal{B}.\label{30105}
\end{eqnarray}
\

For any $A\in \mathcal{A}$, $M\in \mathcal{M}$, $N\in \mathcal{N}$ and $B\in \mathcal{B}$,\\
$$\delta\left(\left[\begin{array}{cc}      A        &       M          \\       N        &      B      \\         \end{array}\right]\right)=
\left[\begin{array}{cc}  a_{11}(A)+\lambda\left(\left[
                                     \begin{array}{cc}
                                       0 & M \\
                                       N & B \\
                                     \end{array}
                                   \right]\right)I_\mathcal{A}     &     c_{12}(M)    \\   d_{21}(N)    &  b_{22}(B)+\lambda\left(\left[
                                     \begin{array}{cc}
                                       A & M \\
                                       N & 0 \\
                                     \end{array}
                                   \right]\right)I_\mathcal{B}  \\         \end{array}\right],$$
$$\delta(I)\left[\begin{array}{cc}      A        &       M          \\       N        &      B      \\         \end{array}\right]
=\left[\begin{array}{cc}  a_{11}(I_\mathcal{A})A  &  a_{11}(I_\mathcal{A})M \\   b_{22}(I_\mathcal{B})N &  b_{22}(I_\mathcal{B})B \\ \end{array}\right]$$
and
$$\left[\begin{array}{cc}      A        &       M          \\       N        &      B      \\         \end{array}\right]\delta(I)=\left[\begin{array}{cc}   Aa_{11}(I_\mathcal{A})  &  Mb_{22}(I_\mathcal{B}) \\   Na_{11}(I_\mathcal{A}) & Bb_{22}(I_\mathcal{B}) \\ \end{array}\right].$$
So by (\ref{30102})-(\ref{30105}), we have for every $S\in \mathcal{U}$,
$$\delta(S)=\delta(I)S+\lambda(S)=S\delta(I)+\lambda(S).$$
The remaining part goes along the same line as the proof of Corollary \ref{205} and this completes the proof.
\end{proof}

Note that a unital prime ring $\mathcal{A}$ with a non-trivial idempotent $P$ can be written as the matrix form
$\left[
  \begin{array}{cc}
    P\mathcal{A}P & P\mathcal{A}P^\perp \\
    P^\perp\mathcal{A}P & P^\perp\mathcal{A}P^\perp \\
  \end{array}
\right]$.
Moreover, for any $A\in \mathcal{A}$, $PAP\mathcal{A}(I-P)=0$ and $P\mathcal{A}(I-P)A(I-P)$ imply $PAP=0$ and $(I-P)A(I-P)=0$, respectively.

\begin{corollary}\label{303}
Let $\mathcal{A}$ be a unital prime ring with a non-trivial idempotent $P$.
If $\delta$ is a weak $(m,n,l)$-Jordan centralizer from $\mathcal{A}$ into itself, then $\delta$ is a centralizer.
\end{corollary}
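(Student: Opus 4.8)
The plan is to realize $\mathcal{A}$ as a generalized matrix algebra and then invoke Theorem \ref{30102}. Writing $P^\perp=I-P$ and using the non-trivial idempotent $P$, represent
$$\mathcal{A}=\left[\begin{array}{cc} P\mathcal{A}P & P\mathcal{A}P^\perp \\ P^\perp\mathcal{A}P & P^\perp\mathcal{A}P^\perp \end{array}\right],$$
so that in the notation of Section 3 one has $\mathcal{A}_{11}=P\mathcal{A}P$ and $\mathcal{B}_{22}=P^\perp\mathcal{A}P^\perp$ (both unital, with units $P$ and $P^\perp$), while $\mathcal{M}=P\mathcal{A}P^\perp$ and $\mathcal{N}=P^\perp\mathcal{A}P$. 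Since $P$ is non-trivial, primeness applied to the factors $P$ and $P^\perp$ forces $P\mathcal{A}P^\perp\neq(0)$, so $\mathcal{M}\neq(0)$ and this is a genuine generalized matrix algebra on which $\delta$ is a weak $(m,n,l)$-Jordan centralizer.

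It then suffices to verify condition (1) of Theorem \ref{30102}, namely that $\mathcal{M}$ is a faithful left $\mathcal{A}_{11}$-module and a faithful right $\mathcal{B}_{22}$-module. For left faithfulness, suppose $PAP\in\mathcal{A}_{11}$ annihilates $\mathcal{M}$ on the left; then $PAP\cdot P\mathcal{A}P^\perp=(PAP)\mathcal{A}(P^\perp)=0$, and primeness (with $P^\perp\neq0$) yields $PAP=0$, which is exactly the first implication recorded in the remark preceding the corollary. For right faithfulness, suppose $P^\perp AP^\perp\in\mathcal{B}_{22}$ annihilates $\mathcal{M}$ on the right; since $P^\perp AP^\perp=P^\perp(P^\perp AP^\perp)$, this reads $P\mathcal{A}P^\perp\cdot P^\perp AP^\perp=(P)\mathcal{A}(P^\perp AP^\perp)=0$, and primeness (with $P\neq0$) gives $P^\perp AP^\perp=0$, which is the second implication recorded in the remark.

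With condition (1) established, Theorem \ref{30102} applies directly and shows $\delta$ is a centralizer. I expect no serious obstacle here: the only point requiring care is the translation between the ring-theoretic primeness hypothesis and the module-faithfulness hypotheses of Theorem \ref{30102}, i.e. recognizing the annihilator conditions $\mathcal{A}_{11}\mathcal{M}=0$ and $\mathcal{M}\mathcal{B}_{22}=0$ as instances of $a\mathcal{A}b=0$ and correctly tracking the idempotents $P$ and $P^\perp$ throughout. This bookkeeping is precisely what the remark before the corollary has already prepared, so the argument is short.
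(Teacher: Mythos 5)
Your proposal is correct and is exactly the paper's intended argument: the paper states this corollary without a separate proof, relying on the remark immediately preceding it, which records precisely your decomposition $\mathcal{A}=\left[\begin{smallmatrix} P\mathcal{A}P & P\mathcal{A}P^\perp \\ P^\perp\mathcal{A}P & P^\perp\mathcal{A}P^\perp \end{smallmatrix}\right]$ and the two primeness implications that translate into the faithfulness hypotheses of Theorem \ref{30102}. Your write-up simply makes explicit the bookkeeping (including $\mathcal{M}\neq(0)$ from non-triviality of $P$) that the paper leaves to the reader.
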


As von Neumann algebras have rich idempotent elements and factor von Neumann algebras are prime, the following corollary is obvious.
\begin{corollary}\label{304}
Let $\mathcal{A}$ be a factor von Neumann algebra and $\delta$ be a weak $(m,n,l)$-Jordan centralizer from $\mathcal{A}$ into itself,
then $\delta$ is a centralizer.
\end{corollary}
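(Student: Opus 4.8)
The plan is to deduce this directly from Corollary \ref{303}, whose two hypotheses on the underlying algebra are both classical structural properties of a factor. Writing $\mathcal{A}$ for our factor von Neumann algebra, I need only check that (a) $\mathcal{A}$ is a unital prime ring and (b) $\mathcal{A}$ contains a non-trivial idempotent; once these are in hand, Corollary \ref{303} applies verbatim and forces the weak $(m,n,l)$-Jordan centralizer $\delta$ from $\mathcal{A}$ into itself to be a centralizer. Note that, in contrast to Theorem \ref{210}, no boundedness hypothesis on $\delta$ is needed, since Corollary \ref{303} is purely ring-theoretic.

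For (a), I would record the reason behind the assertion (already made in the text) that a factor von Neumann algebra is prime as a ring, i.e. $A\mathcal{A}B=\{0\}$ forces $A=0$ or $B=0$. One clean way to see this is via ideals: the ultraweak closure of the algebraic two-sided ideal $\mathcal{A}A\mathcal{A}$ is an ultraweakly closed two-sided ideal, hence of the form $z\mathcal{A}$ for a central projection $z$; since $\mathcal{A}$ is a factor, $z\in\{0,I\}$, so for $A\neq0$ the ideal $\mathcal{A}A\mathcal{A}$ is ultraweakly dense in $\mathcal{A}$. From $A\mathcal{A}B=\{0\}$ one gets $(\mathcal{A}A\mathcal{A})B=\{0\}$, and as right multiplication by $B$ is ultraweakly continuous this yields $\mathcal{A}B=\{0\}$, whence $B=0$. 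For (b), a von Neumann algebra is the ultraweakly closed linear span of its projections, so as soon as $\mathcal{A}\neq\mathbb{C}I$ it contains a projection $P$ with $P\neq0$ and $P\neq I$, which is the required non-trivial idempotent.

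It then remains only to dispose of the degenerate factor $\mathcal{A}=\mathbb{C}I$, the one case in which Corollary \ref{303} does not literally apply because no non-trivial idempotent exists. Here $\mathcal{A}$ is commutative and one-dimensional, so formula (\ref{2002}) of Lemma \ref{201} together with the scalar argument used in the proof of Corollary \ref{205} shows at once that $\lambda(A)=0$ and $\delta(A)=\delta(I)A=A\delta(I)$, so $\delta$ is a centralizer. There is essentially no obstacle in this argument: both (a) and (b) are well-known facts about factors, and the only point deserving a moment's attention is precisely this separate treatment of the trivial factor $\mathbb{C}I$.
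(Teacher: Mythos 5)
Your proposal is correct and follows essentially the same route as the paper, which derives Corollary \ref{304} from Corollary \ref{303} by simply noting that factor von Neumann algebras are prime and have rich idempotents. Your write-up supplies the details the paper treats as obvious (the primeness argument via central projections, existence of a non-trivial projection) and, commendably, also handles the degenerate factor $\mathbb{C}I$, where Corollary \ref{303} does not literally apply but every linear map is trivially a centralizer.
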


Obviously, when $\mathcal{N}=0$, $\mathcal{U}$ degenerates to an upper triangular algebra. Thus we have the following corollary.
\begin{corollary}\label{305}
Let $\mathcal{U}=Tri(\mathcal{A},\mathcal{M},\mathcal{B})$ be an upper triangular algebra
such that $\mathcal{M}$ is a faithful ($\mathcal{A}$,~$\mathcal{B}$)-bimodule.
If $\delta$ is a weak $(m,n,l)$-Jordan centralizer from $\mathcal{A}$ into itself, then $\delta$ is a centralizer.
\end{corollary}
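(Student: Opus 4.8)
The final statement is Corollary \ref{305}, which concerns upper triangular algebras $\mathcal{U}=Tri(\mathcal{A},\mathcal{M},\mathcal{B})$ with $\mathcal{M}$ a faithful $(\mathcal{A},\mathcal{B})$-bimodule. This means $\mathcal{N}=0$ and the remaining bimodule $\mathcal{M}$ is faithful as a left $\mathcal{A}$-module and as a right $\mathcal{B}$-module. The plan is simply to deduce this as a special case of Theorem \ref{30102}, whose conclusion is exactly that $\delta$ is a centralizer under any one of four faithfulness hypotheses on the pair $(\mathcal{M},\mathcal{N})$.

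First I would observe that the hypothesis ``$\mathcal{M}$ is a faithful $(\mathcal{A},\mathcal{B})$-bimodule'' unpacks into two statements: $\mathcal{M}$ is a faithful left $\mathcal{A}$-module and $\mathcal{M}$ is a faithful right $\mathcal{B}$-module. This is precisely condition (1) in Theorem \ref{30102}. Notice that in the triangular case $\mathcal{N}=0$, so conditions (2), (3), (4) — which each require some faithfulness of $\mathcal{N}$ — are vacuous or unavailable, but condition (1) makes no reference to $\mathcal{N}$ at all and is therefore the correct one to invoke. Thus the only verification needed is that the generalized matrix algebra machinery of Section 3 applies when one of the two bimodules is zero, which is explicitly permitted by the standing assumption that at least one of $\mathcal{M}$ and $\mathcal{N}$ is nonzero (here $\mathcal{M}\neq 0$ since it is faithful and the algebras are unital).

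The second step is to confirm that a weak $(m,n,l)$-Jordan centralizer $\delta$ on the upper triangular algebra $Tri(\mathcal{A},\mathcal{M},\mathcal{B})$ really is a weak $(m,n,l)$-Jordan centralizer on the generalized matrix algebra $\mathcal{U}=\left[\begin{array}{cc}\mathcal{A}&\mathcal{M}\\0&\mathcal{B}\end{array}\right]$ in the sense of the defining relation (\ref{101}); but these two algebras are literally identical (the triangular algebra is the degenerate Morita context with $\mathcal{N}=0$, as remarked just before the corollary). So no translation of the hypothesis is required. Invoking Theorem \ref{30102}(1) then yields directly that $\delta$ is a centralizer.

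I do not expect any genuine obstacle here, since the corollary is a pure specialization. The only minor point worth flagging is a typographical one in the statement: it says ``$\delta$ from $\mathcal{A}$ into itself,'' but the intended domain is evidently $\mathcal{U}$ into itself, matching Theorem \ref{30102}; the proof proceeds with $\delta$ acting on $\mathcal{U}=Tri(\mathcal{A},\mathcal{M},\mathcal{B})$. With that understood, the proof reduces to a single sentence applying the theorem. The entire argument is therefore: since $\mathcal{M}$ is a faithful left $\mathcal{A}$-module and a faithful right $\mathcal{B}$-module, condition (1) of Theorem \ref{30102} is satisfied, and hence $\delta$ is a centralizer.
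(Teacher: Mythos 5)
Your proposal is correct and matches the paper's own reasoning exactly: the paper states the corollary as an immediate consequence of Theorem \ref{30102}, treating the upper triangular algebra as the degenerate generalized matrix algebra with $\mathcal{N}=0$ and invoking condition (1), which is precisely what you do. Your observation that ``from $\mathcal{A}$ into itself'' should read ``from $\mathcal{U}$ into itself'' is also a genuine typographical slip in the paper's statement.
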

Let $\mathcal{N}$ be a nest on a Hilbert space $H$ and alg$\hspace{1pt}$$\mathcal{N}$ be the associated algebra.
If $\mathcal{N}$ is trivial, then alg$\hspace{1pt}$$\mathcal{N}$ is $B(H)$. If $\mathcal{N}$ is nontrivial, take a nontrivial projection $P\in \mathcal{N}$.
Let $\mathcal{A}=P\mathrm{alg}\hspace{1pt}\mathcal{N}P$, $\mathcal{M}=P\mathrm{alg}\hspace{1pt}\mathcal{N}(I-P)$ and $\mathcal{B}=(I-P)\mathrm{alg}\hspace{1pt}\mathcal{N}(I-P)$.
Then $\mathcal{M}$ is a faithful ($\mathcal{A}$, $\mathcal{B}$)-bimodule, and alg$\hspace{1pt}$$\mathcal{N}$=Tri($\mathcal{A}$, $\mathcal{M}$, $\mathcal{B}$)
is an upper triangular algebra.
Thus as an application of Corollaries \ref{304} and \ref{305}, we have the following corollary.
\begin{corollary}
Let $\mathcal{N}$ be a nest on a Hilbert space $H$ and $\mathrm{alg}\hspace{1pt}\mathcal{N}$ be the associated algebra.
If $\delta$ is a weak $(m,n,l)$-Jordan centralizer from $\mathrm{alg}\hspace{1pt}\mathcal{N}$ into itself, then $\delta$ is a centralizer.
\end{corollary}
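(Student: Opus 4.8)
The plan is to argue by a dichotomy on whether the nest $\mathcal{N}$ is trivial, reducing each of the two cases to a corollary already in hand. The structural decomposition of a nest algebra recorded in the paragraph preceding the statement does essentially all of the conceptual work, so the proof will consist of recognizing $\mathrm{alg}\,\mathcal{N}$ as an algebra to which Corollary \ref{304} or Corollary \ref{305} directly applies. Note that neither of these corollaries imposes any continuity hypothesis on $\delta$, so the conclusion will hold for an arbitrary (not necessarily bounded) weak $(m,n,l)$-Jordan centralizer, consistent with the way the statement is phrased.

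First I would treat the case where $\mathcal{N}=\{(0),H\}$ is trivial. Here $\mathrm{alg}\,\mathcal{N}=B(H)$, which is a factor (its center is $\mathbb{K}I$) and hence a factor von Neumann algebra. Since $\delta$ is a weak $(m,n,l)$-Jordan centralizer from $B(H)$ into itself, Corollary \ref{304} applies verbatim and yields that $\delta$ is a centralizer. Next, suppose $\mathcal{N}$ is nontrivial, so that there is a projection $P\in\mathcal{N}$ with $(0)\neq PH\neq H$. Following the construction before the statement, I set $\mathcal{A}=P\,\mathrm{alg}\,\mathcal{N}\,P$, $\mathcal{M}=P\,\mathrm{alg}\,\mathcal{N}\,(I-P)$ and $\mathcal{B}=(I-P)\,\mathrm{alg}\,\mathcal{N}\,(I-P)$. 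Because every operator in $\mathrm{alg}\,\mathcal{N}$ leaves $PH$ invariant, the corner $(I-P)\,\mathrm{alg}\,\mathcal{N}\,P$ vanishes, so the block decomposition relative to $P$ exhibits $\mathrm{alg}\,\mathcal{N}$ as the upper triangular algebra $\mathrm{Tri}(\mathcal{A},\mathcal{M},\mathcal{B})$, with $\mathcal{A},\mathcal{B}$ unital (identities $P$ and $I-P$) and $\mathcal{M}$ a unital $(\mathcal{A},\mathcal{B})$-bimodule.

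The one point requiring care is the faithfulness of $\mathcal{M}$ as an $(\mathcal{A},\mathcal{B})$-bimodule, and this is the step I would single out as the only genuine content beyond bookkeeping. Here I would use the rank-one characterization of operators in $\mathrm{alg}\,\mathcal{N}$: taking $K=PH$, one checks that $K\in\mathcal{J}(\mathcal{N})$ (indeed $PH\neq(0)$ and $(PH)_-\subseteq PH\neq H$), so every rank-one operator $x\otimes f$ with $x\in PH$ and $f\in(PH)^{\perp}$ lies in $\mathrm{alg}\,\mathcal{N}$ and in fact in $\mathcal{M}$. These rank-one operators already force faithfulness on both sides: if $A\in\mathcal{A}$ kills $\mathcal{M}$ on the left then $A$ annihilates $PH$, whence $A=0$, and symmetrically no nonzero $B\in\mathcal{B}$ can annihilate $\mathcal{M}$ on the right. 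With faithfulness established, Corollary \ref{305} applies and gives that $\delta$ is a centralizer. As the two cases are exhaustive, $\delta$ is a centralizer in all cases, completing the proof.
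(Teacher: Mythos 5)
Your proposal is correct and takes essentially the same route as the paper: the trivial case is dispatched by applying Corollary~\ref{304} to $B(H)$, and the nontrivial case by fixing a nontrivial $P\in\mathcal{N}$, writing $\mathrm{alg}\hspace{1pt}\mathcal{N}=\mathrm{Tri}(\mathcal{A},\mathcal{M},\mathcal{B})$ with $\mathcal{A}=P\hspace{1pt}\mathrm{alg}\hspace{1pt}\mathcal{N}P$, $\mathcal{M}=P\hspace{1pt}\mathrm{alg}\hspace{1pt}\mathcal{N}(I-P)$, $\mathcal{B}=(I-P)\hspace{1pt}\mathrm{alg}\hspace{1pt}\mathcal{N}(I-P)$, and invoking Corollary~\ref{305}. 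The only difference is that you supply the verification (via rank-one operators $x\otimes f$, $x\in PH$, $f\in(PH)^\perp$) that $\mathcal{M}$ is a faithful $(\mathcal{A},\mathcal{B})$-bimodule, a fact the paper asserts without proof; this detail is accurate and strengthens the exposition.
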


In the following, we study $(m,n,l)$-Jordan centralizer on AF algebras. A unital $C^*$-algebra $\mathcal{B}$ is called \emph{approximately finite} (AF) if $\mathcal{B}$ contains an increasing chain $\mathcal{B}_n\subseteq \mathcal{B}_{n+1}$ of finite-dimensional $C^*$-subalgebra, all containing the unit $I$ of $\mathcal{B}$, such that $\bigcup_{n=1}^\infty\mathcal{B}_n$ is dense in $\mathcal{B}$. For more details and related terms, we refer the readers to \cite{Derivable}.

\begin{lemma}\label{306}
Let $\mathcal{M}_n(\mathbb{C})$ be the set of all $n \times n$ complex matrices,
$\mathcal{A}$ be a CSL subalgebra of
$\mathcal{M}_{n_1}(\mathbb{C}) \oplus \cdots \oplus \mathcal{M}_{n_k}(\mathbb{C})$, and
$\mathcal{B}$ be an algebra such that $\mathcal{M}_{n_1}(\mathbb{C})
\oplus \cdots \oplus \mathcal{M}_{n_k}(\mathbb{C}) \subseteq \mathcal{B}$ as
an embedding. If $\delta$ is an $(m,n,l)$-Jordan centralizer from $\mathcal{A}$
into $\mathcal{B}$, then $\delta$ is a centralizer.
\end{lemma}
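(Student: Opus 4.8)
The plan is to reduce the assertion to a single commutation relation and then read it off from the structure of finite-dimensional CSL algebras. Since $\delta$ is an $(m,n,l)$-Jordan centralizer, we have $\lambda_A=0$ for all $A$ and hence $\lambda(A)=0$; viewing $\mathcal{B}$ as an $\mathcal{A}$-bimodule (permitted by Remark \ref{211}), Lemma \ref{201} yields
$$\delta(A)=\frac{m+l}{m+n+2l}\,\delta(I)A+\frac{n+l}{m+n+2l}\,A\delta(I)$$
for every $A\in\mathcal{A}$. As $\frac{m+l}{m+n+2l}+\frac{n+l}{m+n+2l}=1$, it suffices to prove that $\delta(I)$ commutes with every element of $\mathcal{A}$: once $\delta(I)A=A\delta(I)$ holds for all $A$, the displayed formula collapses to $\delta(A)=\delta(I)A=A\delta(I)$, and then $\delta(AB)=\delta(I)AB=\delta(A)B$ and $\delta(AB)=AB\delta(I)=A\delta(B)$ for all $A,B\in\mathcal{A}$, so $\delta$ is both a left and a right centralizer, i.e. a centralizer.

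To establish the commutation relation I would first record that $\mathcal{A}$ is spanned by its idempotents. Because $\mathcal{A}$ is a CSL algebra, the underlying lattice consists of commuting projections, which can be simultaneously diagonalized; refining to rank-one pieces produces a system of matrix units $\{e_{ij}\}$ and a preorder (reflexive, transitive relation) $\preceq$ on the index set with $\mathcal{A}=\mathrm{span}\{e_{ij}:i\preceq j\}$. Each $e_{ii}$ is an idempotent of $\mathcal{A}$, and for $i\neq j$ with $i\preceq j$ the element $e_{ii}+e_{ij}$ is also an idempotent of $\mathcal{A}$, since $e_{ii}e_{ij}=e_{ij}$, $e_{ij}e_{ii}=0$ and $e_{ij}^2=0$ give $(e_{ii}+e_{ij})^2=e_{ii}+e_{ij}$. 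Writing $e_{ij}=(e_{ii}+e_{ij})-e_{ii}$ then exhibits every basis matrix unit of $\mathcal{A}$ as a linear combination of idempotents, so the idempotents of $\mathcal{A}$ span $\mathcal{A}$.

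With this in hand the commutation relation is immediate from Lemma \ref{204}: part (i) gives $\delta(P)=P\delta(I)=\delta(I)P$ for every idempotent $P\in\mathcal{A}$, so $\delta(I)$ commutes with each such $P$. Since commuting with a fixed operator is a linear condition and the idempotents span $\mathcal{A}$, it follows that $\delta(I)A=A\delta(I)$ for every $A\in\mathcal{A}$, which by the first paragraph finishes the proof.

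The routine parts are the two applications of Lemmas \ref{201} and \ref{204}; the hypothesis $\mathcal{A}\subseteq\mathcal{M}_{n_1}(\mathbb{C})\oplus\cdots\oplus\mathcal{M}_{n_k}(\mathbb{C})\subseteq\mathcal{B}$ enters only to make the products $\delta(I)A$ and $A\delta(I)$ meaningful in $\mathcal{B}$ and to realize $\mathcal{B}$ as an $\mathcal{A}$-bimodule. The main obstacle is the structural claim in the second paragraph, namely that commutativity of the lattice forces $\mathcal{A}$ to be a digraph (incidence) algebra admitting the matrix-unit description above and hence to be spanned by idempotents; this is where the CSL hypothesis is genuinely used, and it is the step one must justify carefully before the rest of the argument goes through.
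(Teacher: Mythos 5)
Your proof is correct, and it takes a genuinely different route from the paper's on the key point. Both arguments rest on the same structural fact, which neither proves in full detail: a finite-dimensional CSL algebra is a digraph (incidence) algebra, the linear span of a system of matrix units $\{E_{ij}\}$ containing all diagonal units. From there the paper proceeds unit by unit: for $i\neq j$ it writes $E_{ij}=E_{ii}E_{ij}+E_{ij}E_{ii}$, applies the polarized identity of Lemma \ref{201} (with all $\lambda$'s zero, as permitted by Remark \ref{211}) together with $\delta(E_{ii})=E_{ii}\delta(I)=\delta(I)E_{ii}$ to get $\delta(E_{ij})=\delta(E_{ii})E_{ij}$ and $\delta(E_{ij})=E_{ij}\delta(E_{jj})$, and then uses $\delta(I)=\sum_k\delta(E_{kk})$ to conclude $\delta(E_{ij})=E_{ij}\delta(I)=\delta(I)E_{ij}$. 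You avoid the Jordan-product computation entirely: since $E_{ii}E_{ij}=E_{ij}$, $E_{ij}E_{ii}=0$ and $E_{ij}^2=0$, the element $E_{ii}+E_{ij}$ is an idempotent of $\mathcal{A}$, so every off-diagonal matrix unit is a difference of idempotents and the idempotents span $\mathcal{A}$; then Lemma \ref{204}(i) (also valid in the bimodule setting by Remark \ref{211}) makes $\delta(I)$ commute with a spanning set, and linearity finishes. This is shorter and is in the same spirit as the paper's own idempotent-difference trick in Corollary \ref{205} and in Case 1 of Theorem \ref{210}, just not deployed in this lemma. One small streamlining of your own argument: once you know the idempotents span $\mathcal{A}$, Lemma \ref{204}(i) and linearity already give $\delta(A)=A\delta(I)=\delta(I)A$ for all $A\in\mathcal{A}$ directly, so the reduction via the formula of Lemma \ref{201} in your first paragraph, while harmless, is not needed. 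What the paper's computation buys in exchange is that it never needs the span-of-idempotents observation, only the matrix-unit relations themselves; but both proofs lean equally on the digraph-algebra structure of finite-dimensional CSL algebras, which you rightly single out as the step requiring careful justification.
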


\begin{proof}
Let $\mathcal{A}$ be the linear span of its matrix units $\{E_{ij}\}$, and since $\delta$ is linear, we only need to show that for any $i$, $j$,
\begin{eqnarray}
\delta(E_{ij})=E_{ij}\delta(I)=\delta(I)E_{ij}.\label{300003}
\end{eqnarray}

If $i=j$, by Lemma \ref{202}, (\ref{300003}) is clear.\

Next, we will prove (\ref{300003}) for $i\neq j$. By Lemma \ref{201} and Remark \ref{211}, we have
\begin{eqnarray*}
(m+n+l)\delta(E_{ij})&=&(m+n+l)\delta(E_{ii}E_{ij}+E_{ij}E_{ii})\\
&=&m\delta(E_{ii})E_{ij}+nE_{ii}\delta(I)E_{ij}+lE_{ii}\delta(I)E_{ij}\\
&=&(m+n+l)\delta(E_{ii})E_{ij},
\end{eqnarray*}
Hence $\delta(E_{ij})=\delta(E_{ii})E_{ij}$ for any $i$, $j$.\

Similarly, we have $\delta(E_{ij})=E_{ij}\delta(E_{jj})$ for any $i$, $j$.\

Hence for any $i$, $j$,
\begin{eqnarray*}
E_{ij}\delta(I)=E_{ij}\sum_{k=1}^n \delta(E_{kk})=E_{ij}\sum_{k=1} ^n E_{kk}\delta(E_{kk})=E_{ij}\delta(E_{jj})=\delta(E_{ij}).
\end{eqnarray*}

Similarly, we have for any $i$, $j$, $\delta(I)E_{ij}=\delta(E_{ij})$ and the proof is complete.
\end{proof}

\begin{theorem}
Let $\mathcal{A}$ be a canonical subalgebra of an AF $C^*$-algebra $\mathcal{B}$. If $\delta$ is a bounded $(m,n,l)$-Jordan centralizer from $\mathcal{A}$ into $\mathcal{B}$, then $\delta$ is a centralizer.
\end{theorem}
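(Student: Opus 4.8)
The plan is to exploit the inductive-limit structure of a canonical subalgebra together with Lemma \ref{306} and a density argument. Since $\mathcal{A}$ is a canonical subalgebra of the AF $C^*$-algebra $\mathcal{B}=\overline{\bigcup_{n}\mathcal{B}_n}$, with each $\mathcal{B}_n$ finite-dimensional and containing the unit $I$ of $\mathcal{B}$, I would set $\mathcal{A}_n=\mathcal{A}\cap\mathcal{B}_n$. By the structure of canonical subalgebras (see \cite{Derivable}), $\mathcal{A}=\overline{\bigcup_{n}\mathcal{A}_n}$, each $\mathcal{A}_n$ contains $I$, and each $\mathcal{A}_n$ is a CSL subalgebra of the finite-dimensional $C^*$-algebra $\mathcal{B}_n\cong\mathcal{M}_{n_1}(\mathbb{C})\oplus\cdots\oplus\mathcal{M}_{n_k}(\mathbb{C})$, which embeds unitally into $\mathcal{B}$.

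First I would observe that the restriction $\delta|_{\mathcal{A}_n}$ is again an $(m,n,l)$-Jordan centralizer from $\mathcal{A}_n$ into $\mathcal{B}$: for every $A\in\mathcal{A}_n$ we have $A^2\in\mathcal{A}_n$ and $I\in\mathcal{A}_n$, so the defining identity $(m+n+l)\delta(A^2)=m\delta(A)A+nA\delta(A)+lA\delta(I)A$ holds verbatim on $\mathcal{A}_n$. This is exactly the hypothesis of Lemma \ref{306}, with $\mathcal{A}_n$ playing the role of $\mathcal{A}$ and the ambient $\mathcal{B}$ unchanged. Applying Lemma \ref{306} therefore yields $\delta(A)=\delta(I)A=A\delta(I)$ for every $A\in\mathcal{A}_n$, and since $n$ is arbitrary this centralizer identity holds on the dense subalgebra $\bigcup_{n}\mathcal{A}_n$.

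Finally I would pass to the norm closure using the boundedness of $\delta$. Given any $A\in\mathcal{A}$, choose $A_j\in\bigcup_{n}\mathcal{A}_n$ with $A_j\to A$ in norm. Continuity of $\delta$ gives $\delta(A_j)\to\delta(A)$, while joint continuity of multiplication gives $\delta(I)A_j\to\delta(I)A$ and $A_j\delta(I)\to A\delta(I)$. Since $\delta(A_j)=\delta(I)A_j=A_j\delta(I)$ for each $j$, passing to the limit yields $\delta(A)=\delta(I)A=A\delta(I)$, so $\delta$ is a centralizer.

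The only genuinely delicate point is the structural first step, namely confirming that a canonical subalgebra does decompose as the closed union of the CSL subalgebras $\mathcal{A}_n=\mathcal{A}\cap\mathcal{B}_n$, each containing the common unit $I$ and each sitting unitally inside a finite-dimensional summand of the form treated in Lemma \ref{306}. Once this structural fact is invoked, the application of Lemma \ref{306} termwise and the subsequent approximation argument are entirely routine; note that the hypothesis of boundedness on $\delta$ enters only in this last limiting step, where it is essential.
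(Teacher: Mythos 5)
Your proposal is correct and takes essentially the same route as the paper: restrict $\delta$ to the finite-dimensional CSL subalgebras $\mathcal{A}_n$, apply Lemma~\ref{306} to obtain $\delta(S)=\delta(I)S=S\delta(I)$ there, and pass to all of $\mathcal{A}$ by density of $\bigcup_n \mathcal{A}_n$ and norm continuity of $\delta$. The paper's own proof is simply a terser version of this argument, leaving implicit the structural facts about canonical subalgebras and the verification that the restriction is again an $(m,n,l)$-Jordan centralizer, both of which you spell out.
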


$Proof$. Suppose $\delta$ is a bounded $(m,n,l)$-Jordan centralizer from $\mathcal{A}$ into $\mathcal{B}$.
Since $\mathcal{A}_n$ is a CSL algebra, $\delta |_{\mathcal{A}_n}$ is a centralizer by Lemma \ref{306}; that is,
for any $S$ in $\mathcal{A}_n$, $$\delta(S)= \delta(I)S=S\delta(I).$$
Since $\delta$ is norm continuous and $\cup _{i=1} ^\infty A_n$ is
dense in $A$, it follows that $\delta$ is a
centralizer.

\section*{Acknowledgement}
This work is supported by NSF of China.

\end{document}